
\documentclass[final,leqno]{siamltex}

% definitions used by included articles, reproduced here for
% educational benefit, and to minimize alterations needed to be made
% in developing this sample file.

% some definitions of bold math italics to make typing easier.
% They are used in the corollary.

\usepackage{amsfonts,amssymb} %%%add
\usepackage{amsmath} %%%add
\usepackage{color} %%%add
\usepackage{graphicx} %%%add
\usepackage{epsfig,mathrsfs} %%%add
\usepackage[bf,SL,BF]{subfigure} %%%add
\usepackage{fancyhdr} %%%add
\usepackage{CJK} %%%add
\usepackage{caption} %%%add
\usepackage{wrapfig} %%%add
\usepackage{cases} %%%add
\usepackage{bm}%%%add
\usepackage{algorithm}
\usepackage{algorithmic}
\newtheorem{remark}{Remark}[section] %%%add
\newtheorem{example}{Example}[section] %%%add

%\title{Multigrid method for a class of symmetric  Toeplitz matrices \& application to the nonlocal model
\title{ Convergence proof for the multigird method of the nonlocal model
%Multigrid method for a class of symmetric  Toeplitz matrices \& application to the nonlocal model
\thanks{This work was supported by NSFC 11271173. }}

\author{Minghua Chen\thanks{Corresponding author. School of Mathematics and Statistics, Gansu Key Laboratory of Applied Mathematics and Complex Systems,
 Lanzhou University, Lanzhou 730000, P.R. China  (Email: chenmh@lzu.edu.cn).}
        \and Weihua Deng\thanks{ School of Mathematics and Statistics, Gansu Key Laboratory of Applied Mathematics and Complex Systems,
 Lanzhou University, Lanzhou 730000, P.R. China (Email: dengwh@lzu.edu.cn). }}

\begin{document}

\maketitle

\begin{abstract}
Recently, nonlocal models attract the wide interests of scientist. They mainly come from two applied scientific fields: peridyanmics and anomalous diffusion. Even though the matrices of the algebraic equation corresponding the nonlocal models are usually Toeplitz (denote $a_0$ as the principal diagonal element, $a_1$ as the trailing diagonal element, etc).  There are still some differences for the models in these two fields. For the model of anomalous diffusion, $a_0/a_1$ is uniformly bounded; most of the time,  $a_0/a_1$ of the model for peridyanmics is unbounded as the stepsize $h$ tends to zero. Based on the uniform boundedness of $a_0/a_1$, the convergence of the two-grid method is well established [Chan, Chang, and Sun,  {\em SIAM J. Sci. Comput.},  {19} (1998), pp. 516--529; Pang and  Sun, {\em J. Comput. Phys.}, {231} (2012),   pp. 693--703; Chen,  Wang, Cheng, and Deng, {\em BIT}, {54} (2014),  pp. 623--647]. This paper provides the detailed proof of the convergence of the two-grid method for the nonlocal model of peridynamics. Some special cases of the full multigrid and the V-cycle multigrid are also discussed. The numerical experiments are performed to verify the convergence.
\end{abstract}

\begin{keywords}
multigrid method, nonlocal model,  Toeplitz  matrices
\end{keywords}

\begin{AMS}
65M55
\end{AMS}

\pagestyle{myheadings}
\thispagestyle{plain}
\markboth{M. H. CHEN AND W. H. DENG}{MULTIGRID METHOD FOR NONLOCAL MODEL}

\section{Introduction}
Ranging from characterizing peridynamics  \cite{Silling:00} to anomalous diffusion \cite{Metzler:00}, the nonlocal models have been builded in more and more scientific fields. The used nonlocal operators include nonlocal diffusion operators \cite{Du:12}, fractional Laplacian operators \cite{Huang:14}, Riesz fractional derivative \cite{Chen:0013,Yang:11},
and the Riesz tempered fractional derivative \cite{Chen:13,Chen:15}.
In the field of anomalous diffusion, the nonlocal operators are derived in both the time and space directions. For the peridynamics, the nonlocal operators are just applied in the space direction. Mathematically, the nonlocal operators corresponding to these two applied fields have close connections, being examined in \cite{DDDGLM:15,Du:12}.
The nonlocal operator mentioned in this paper has a finite
range of nonlocal interactions measured by a horizon parameter $\delta$ \cite{Du:13,Silling:00}.
When $\delta \rightarrow 0$, the nonlocal effect diminishes and the local
or classical partial differential equation (PDE) models are recovered, if the latter are well-defined. For $\delta>0$, compared with classical PDE models, the complexities are introduced by the nonlocal interactions and the matrices of the resulting discrete systems are no longer sparse. 
In a series of recent studies \cite{Tian:13,Tian:14}, the robust discretizations of the nonlocal models have been well deveopled. Based on the fast Toeplitz solver, the direct solution method for the resulting algebraic equation is discussed in \cite{WaTi12}. In this work, we focus on the efficiently iterative solvers, especially, providing the strict convergence proof for the algorithm.

As is well know that the structure and conditioning  of the resulting coefficient matrix of the numerical scheme play a key role for the effectiveness of the linear solver. For the nonlocal models, the associated stiffness matrices tend to be dense, and its condition number depends on both the nonlocal interaction kernel and horizon parameter $\delta$. When $\delta$ is fixed, the condition number is bounded even the stepsize $h$ tends to zero \cite{Aksoylu:14,Zhou:10}. However, if $\delta$ depends on $h$, the condition number will tend to infinity as $h$ becomes smaller and smaller. So, it is interesting/necessary to understand the performance of the linear solver for the different types of the horizon $\delta$. In particular, finding the effective linear solver should be paid much more attention for the case that $\delta$ depends on $h$. This work focuses on the multigrid method (MGM) with uniform convergence rate for various types of the horizon $\delta$.
MGM has often been shown to be the most efficient iterative method for numerically solving the PDEs \cite{Bramble:87,Hackbusch:01}. For the uniform convergence of the V-cycle MGM, one can refer to  \cite{Chang:15,Xu:92,Xu:02,Xu:97} for the second order elliptic operator and \cite{Chen:16} for the block tridiagonal matrix.
For the multilevel matrix algebras like circulant, tau, Hartely, the V-cycle convergence is theoretically obtained by using some special interpolation operators \cite{Arico:07,Arico:04}. For works on the so-called full MGM, i.e., recursive application of the two-grid method (TGM) procedure, see, e.g., \cite{Chan:98,Fiorentino:96,Serra:02}.

For nonlocal models, peridynamics and anomalous diffusion are two of the most successful applied fields. The common feature of the stiffness matrix of the resulting algebraic equation from the models is to have Toeplitz structure. For the Toeplitz matrix, we denote $a_0$ as the principal diagonal element, $a_1$ as the trailing diagonal element, etc. For the stiffness matrix of nonlocal model describing anomalous diffusion, $a_0/a_1$ is bounded; using this attribute, the uniform convergence of the TGM is theoretically obtained \cite{Chan:98,Chen:14,Pang:12}. However, most of the time, $a_0/a_1$ is unbounded for the stiffness matrix of the peridynamic model. So, some new ideas must be introduced for proving the uniform convergence of the TGM for the nonlocal peridynamic model. This paper provides the detailed proof of the uniform convergence of TGM with unbounded $a_0/a_1$.    Furthermore, the special cases of the full MGM and V-cycle MGM are also discussed. The performed numerical experiments show the effectiveness of the MGM.

The outline of this paper is as follows. In the next section, we discuss the recently introduced finite difference discretizations of the nonlocal operator. The MGM algorithms are introduced in Section 3. In Section 4, we study the uniform  convergence estimates of the TGM for the nonlocal model. Convergence of the full MGM and V-cycle MGM in a special case is analyzed in Section 5. To show the effectiveness of the presented schemes, results of numerical experiments are reported in Section 6. Finally, we conclude the paper with some remarks.

\section{Preliminaries: numerical scheme and multigrid method}
Before delivering the detailed convergence proof of the TGM, in this section, we review and discuss the numerical discretization and multigrid method for the nonlocal model (\ref{2.3}).
\subsection{The nonlocal operator and discretization scheme}\label{sec:1}
In this subsection, we introduce the discretization of the nonlocal operator proposed in \cite{Tian:13} and make some discussions on the generating of the matrix elements and treating of the nonhomogeneous boundaries. Let $\Omega$ be a finite bar in $\mathbb{R}$. Without loss of generality, we take $\Omega=(0,b)$,
$b>0$. For  $u=u(x):\Omega\rightarrow \mathbb{R}$,
the  nonlocal operator $\mathcal{L}_\delta$  is defined by \cite{Tian:13},
\begin{equation}\label{2.1}
\begin{split}
\mathcal{L}_\delta u(x)=\int_{B_\delta(x)}(u(y)-u(x))\gamma_\delta(|x-y|)dy ~~\forall x \in \Omega
\end{split}
\end{equation}
with $B_\delta(x)=\{y \in \mathbb{R}: |y-x|<\delta \}$ denoting a neighborhood centered at $x$ of radius $\delta$,
 which is the horizon parameter and  represents the size of nonlocality;
 the symmetric nonlocal kernel $\gamma_\delta(|x-y|)=0$  if $y \not \in B_\delta(x)$.

The operator $\mathcal{L}$ is used in both the time-dependent nonlocal volume-constrained diffusion problem \cite{Du:12}
\begin{equation} \label{2.2}
\left\{ \begin{split}
 u_t - \mathcal{L}_\delta u   &=f_\delta &  ~~{\rm on}  & ~~\Omega,\, t>0,\\
                        u(x,0)&=u_0      &  ~~{\rm on}  & ~~\Omega \cup \Omega_\mathcal{I},\\
                 u&=g        &  ~~{\rm on}  & ~~ \Omega_\mathcal{I},t>0,
 \end{split}
 \right.
\end{equation}
for the function $u=u(x,t)$ and its steady-state counterpart
\begin{equation} \label{2.3}
\left\{ \begin{split}
  - \mathcal{L}_\delta u      &=f_\delta&  ~~{\rm on}  & ~~\Omega,\\
                 u&=g        &  ~~{\rm on}  & ~~ \Omega_\mathcal{I},
 \end{split}
 \right.
\end{equation}
where $u=g $ denotes a  volumetric constraint imposed on a volume $\Omega_\mathcal{I}$ that has
a nonzero volume and is made to be disjoint
 from $\Omega$. For 1D case, we use
$\Omega_\mathcal{I}=(-\delta,0)\cup (b,b+\delta)$.

Let $\gamma_\delta$ be nonnegative and radial, i.e., $\gamma_\delta=\gamma_\delta(|y-x|)\geq 0$.
As in \cite{Tian:13}, we can rewrite (\ref{2.1}) as
\begin{equation}\label{2.4}
\mathcal{L}_\delta u(x)  =\int_{0}^\delta (u(x+s)-2u(x)+u(x-s))\gamma_\delta(s)ds,
\end{equation}
which makes the nonlocal operator  as a {\em continuum difference operator}, or rather an average of finite difference
operators over a continuum scale $(0,\delta)$ \cite{Tian:13}.
Assuming that $u(x)$ is regular enough, from (\ref{2.4}) there exists
\begin{equation*}
\begin{split}
\mathcal{L}_\delta u(x)
                         = C u''(x)  + \mathcal{O} \left(\int_{0}^\delta s^4\gamma_\delta(s)ds\right),
\end{split}
\end{equation*}
where, $C$, is assumed to be positive and independent of $\delta$, i.e.,
\begin{equation*}
0< C=\int_{0}^\delta s^2\gamma_\delta(s)ds < \infty.
\end{equation*}

Now, we introduce and discuss the discretization scheme of (\ref{2.3}). Denote the ratio of the horizon $\delta$ and the mesh size $h$ as
\begin{equation}\label{2.5}
  R=\frac{\delta}{h}>0 ~~{\rm and }~~r=\lfloor R \rfloor,
\end{equation}
which plays an important role in nonlocal diffusion models. Here $\lfloor R \rfloor$ denotes  the greatest integer that is less than or equal to $R$. And we will use $\lceil R \rceil$ to denote the least integer that is greater than or equal to $R$.

 Let $\Omega=(0,b)$ with $\delta<b$
and  the mesh points $x_i=ih$, $h=b/(N+1)$, $i \in \Omega_N=\{-r,\ldots, 0,1,\ldots,N+1,\ldots,N+1+r\}$, where $r$ is defined by (\ref{2.5});  and $u_{i}$ as the numerical approximation of $u(x_i)$ and  $f_{\delta,i}=f_\delta (x_i)$. Denote $I_p=((p-1)h,ph)$ for $1 \leq p \leq r$, and $I_{r+1}=(rh,Rh)=(rh,\delta)$, and the  piecewise linear  basis function is
\begin{equation*}
\phi_p(x)=\left\{ \begin{array}{lll}
 \displaystyle\frac{x-x_{p-1}}{h} &  x \in [x_{p-1}, x_{p}],\\
 \\
 \displaystyle\frac{x_{p+1}-x}{h} & x \in [x_{p}, x_{p+1}]~~~~ {\rm for }~~~~i \in \Omega_N,\\
 \\
 \displaystyle 0 &  {\rm otherwise}.
 \end{array}
 \right.
\end{equation*}

Eq. (\ref{2.1}) can be rewritten as \cite{Tian:13}
%As in \cite{Tian:13}, we rewrite  (\ref{2.1})  as
\begin{equation}\label{2.6}
\begin{split}
\mathcal{L}_\delta u(x) & =\sum_{p=0}^{r+1}\int_{0}^\delta \frac{u(x+s)-2u(x)+u(x-s)}{s}\phi_p(s)s\gamma_\delta(s)ds,
\end{split}
\end{equation}
and an asymptotically compatible discretization of the nonlocal operator $\mathcal{L}_\delta$ has the following form
\begin{equation}\label{2.7}
\begin{split}
\mathcal{L}_\delta^h u_i =&\sum_{p=1}^{r} \frac{u_{i-p}-2u_i+u_{i+p}}{ph}\int_{I_p\cup I_{p+1}} \phi_p(s)s \gamma_\delta(s)ds\\
                        &+\frac{u_{i-r-1}-2u_i+u_{i+r+1}}{(r+1)h}\int_{I_{r+1}} \phi_{r+1}(s)s\gamma_\delta(s)ds.
\end{split}
\end{equation}
Note that the above integral over $I_{r+1}$  automatically vanishes when $r = R$.

The  discretization of (\ref{2.3}) then has the following form
\begin{equation}\label{2.8}
\begin{split}
-\mathcal{L}_\delta^h u_i &=f_{\delta,i}, ~~i \in \{1,2,\cdots,N\},\\
                       u_i&=g_i,~~ ~~i \in \{-r,\cdots,0\} \cup \{N+1,\cdots,N+r+1\}
\end{split}
\end{equation}
with the following sketch that characterizes different variables:
\begin{equation*}
\begin{split}
 & \big[
\underset{\text{boundary  points}}{\underbrace{ x_{-r}\cdots x_{-1}, x_{0},   }}~
\underset{\text{interface  points}}{\underbrace{ x_1, x_2 \cdots x_r,   }}~
\underset{\text{internal  points}}{\underbrace{  x_{r+1}\cdots x_{N-r},    }}~
\underset{\text{interface  points}}{\underbrace{ x_{N-r+1} \cdots x_N,    }}~
\underset{\text{boundary  points}}{\underbrace{  x_{N+1} \cdots x_{N+r+1} }} \big];\\
 &\,\big[
\underset{\text{boundary  values}}{\underbrace{ g_{-r}\cdots g_{-1},  g_{0},    }}~
\underset{\text{interface  values}}{\underbrace{ u_1, u_2 \cdots u_r,   }}~
\underset{\text{internal  values}}{\underbrace{  u_{r+1}\cdots u_{N-r},    }}~
\underset{\text{interface  values}}{\underbrace{ u_{N-r+1} \cdots u_N,    }}~
\underset{\text{boundary  values}}{\underbrace{  g_{N+1} \cdots g_{N+r+1} }}
\big].
\end{split}
\end{equation*}

For notational convenience, we let
 \begin{equation}\label{2.9}
 \begin{split}
 &U_{\delta}^h=[u_1,u_2,\ldots,u_N]^{\rm T}, ~~F_\delta^h=[f_{\delta,1},f_{\delta,2},\ldots,f_{\delta,N}]^{\rm T};\\  &~~
 F_{\mathcal{V},\delta}^h=[f_{\mathcal{V},1},f_{\mathcal{V},2},\ldots,f_{\mathcal{V},r+1},0,\ldots,0,
 f_{\mathcal{V},{N-r}},\ldots,f_{\mathcal{V},{N}}]^{\rm T}.
 \end{split}
  \end{equation}
Thus, the finite difference scheme (\ref{2.8}) can be recast as
 \begin{equation}\label{2.10}
\begin{split}
&A_{\delta}^h U_{\delta}^h =F_{\delta}^h+F_{\mathcal{V},\delta}^h,
\end{split}
\end{equation}
where the stiffness matrix $A_{\delta}^h=\{a_{i,j}\}_{i,j=1}^{N}$  has a banded structure given by
\begin{equation}\label{2.11}
a_{i_1,j_1}=a_{i_2,j_2} \quad\mbox{for}~ \; |i_1-j_1|=|i_2-j_2|\leq r+1\,,\quad\mbox{and} \quad a_{i,j}=0 \quad\mbox{otherwise}.
\end{equation}
We denote $a_k=a_{i,j}$ with $k=|i-j|$. The auxiliary vector  $F_{\mathcal{V},\delta}^h$  can be determined by  the following matrix form
\begin{equation}\label{2.12}
 \left[ \begin{matrix}
f_{\mathcal{V},1}    \\
f_{\mathcal{V},2}    \\
\vdots                 \\
f_{\mathcal{V},r}      \\
f_{\mathcal{V},r+1}
 \end{matrix}
 \right] = \left[ \begin{matrix}
g_0    &  g_{-1} &   \ddots  &   g_{1-r}  &   g_{-r} \\
0      &  g_{0}  &   \ddots  &   \ddots   &   g_{1-r} \\
\ddots & \ddots  &   \ddots  &   \ddots   &   \ddots   \\
 0     & \ddots  &   \ddots  &   g_0      &   g_{-1}    \\
 0     &   0     &   \ddots  &     0      &   g_0
 \end{matrix}
 \right ]
 \left [ \begin{matrix}
a_{1}   \\
a_{2}    \\
\vdots                                                 \\
a_{r}      \\
a_{r+1}
 \end{matrix}
 \right],
\end{equation}
and
\begin{equation}\label{2.13}
  \left [ \begin{matrix}
f_{\mathcal{V},N}    \\
f_{\mathcal{V},N-1}    \\
\vdots                 \\
f_{\mathcal{V},N-r+1}      \\
f_{\mathcal{V},N-r}
 \end{matrix}
 \right ]= \left [ \begin{matrix}
g_{N+1}&  g_{N+2}&   \ddots  &   g_{N+r}  &   g_{N+r+1} \\
0      &  g_{N+1}&   \ddots  &   \ddots   &   g_{N+r} \\
\ddots &  \ddots &   \ddots  &   \ddots   &   \ddots   \\
 0     & \ddots  &   \ddots  &   g_{N+1}  &   g_{N+2}    \\
 0     &   0     &   \ddots  &     0      &   g_{N+1}
 \end{matrix}
 \right ]
 \left [ \begin{matrix}
a_{1}   \\
a_{2}    \\
\vdots                                                 \\
a_{r}      \\
a_{r+1}
 \end{matrix}
 \right ].
\end{equation}
In the following, 
we focus on the special case where the kernel $\gamma_\delta(s)$ is taken to be a constant, i.e., $\gamma_\delta(s)=3\delta^{-3}$ \cite{Tian:13}. More general kernel types \cite{Aksoylu:14,Du:12,Tian:13}  can be similarly studied.
The entries of the stiffness matrix $A_{\delta}^h$ can be  explicitly documented by

Case 1: $R \leq 1$.
\begin{equation}\label{2.14}
a_{|i-j|}=a_{i,j}=\left\{ \begin{array}{ll}
 \displaystyle\frac{2}{h^{2}},   & j=i,\\
 \\
\displaystyle -\frac{1}{h^{2}},   & |j-i|=1,\\
\\
\displaystyle 0,                                                                       &{\rm otherwise}.
 \end{array}
 \right.
\end{equation}

Case 2: $R >1$. Let $p=|j-i|\geq 1$. Then
\begin{equation}\label{2.15}
a_{|i-j|}=a_{i,j}= \left\{ \begin{array}{ll}
\displaystyle -2\sum_{p=1}^{r+1}a_{i,p},   & j=i,\\
 \\
\displaystyle-\frac{3}{h^{2}R^3},&  p=1:  r-1,\\
\\
\displaystyle-\frac{3r-1 + (R-r)(r^2+rR-2R^2+3r+3R)  }{2h^{2}R^3r}  ,&p =r,\\
\\
\displaystyle-\frac{(R-r)(2R^2-rR-r^2)  }{2h^{2}R^3(r+1)}, &p =r+1,\\
\\
0,                                                                       &{\rm otherwise}.
 \end{array}
 \right.
\end{equation}
If we insert $R \leq 1$ into (\ref{2.15}), which reduces to (\ref{2.14}).

\subsection{Multigrid method}
Let the finest  mesh points $x_i=ih$, $h=b/(N+1)$.
Define the   multiple level of grids \cite{Chen:16,Xu:97}
\begin{equation}\label{3.1}
  \mathcal{M}_m=\left\{x_i^m=\frac{i}{2^m}b, \,i=1: N_m \right\} ~~{\rm with}~~N_m=2^m-1, m=1 : J,
\end{equation}
where  $\mathcal{M}_m$ represents not only the grid with grid spacing $h_m=2^{(J-m)}h$, but also the space of vectors defined on that grid.
The classical   restriction operator $I_m^{m-1}$ and prolongation operator $I_{m-1}^m$  are, respectively, defined by
 \begin{equation}\label{3.2}
\begin{split}
\nu^{m-1}=I_m^{m-1}\nu^m ~~{\rm with}~~ \nu_i^{m-1}=\frac{1}{4}\left(\nu_{2i-1}^{m}+2\nu_{2i}^{m}+\nu_{2i+1}^{m}\right),
~~~i=1:N_{m-1};
\end{split}
\end{equation}
and
 \begin{equation}\label{3.3}
\begin{split}
\nu^{m}=I_{m-1}^{m}\nu^{m-1}~~{\rm with}~~ I_{m-1}^{m} =2\left(I_m^{m-1}\right)^T.
\end{split}
\end{equation}
We use the coarse grid operators defined by the Galerkin approach  \cite[p.\,455]{Saad:03}
\begin{equation}\label{3.4}
  A_{m-1}=I_m^{m-1}A_mI_{m-1}^{m}, ~~~~m=1 : J;
\end{equation}
and for all intermediate $(m,m-1)$ coarse grids we apply the correction operators \cite[p.\,87]{Ruge:87}
\begin{equation}\label{3.5}
  T^m=I_m- I_{m-1}^{m}A_{m-1}^{-1}I_m^{m-1}A_m=I_m- I_{m-1}^{m}P_{m-1}
\end{equation}
with
\begin{equation*}
  P_{m-1}=A_{m-1}^{-1}I_m^{m-1}A_m.
\end{equation*}
We choose    the damped Jacobi iteration matrix by \cite[p.\,9]{Briggs:00}
\begin{equation}\label{3.6}
  K_m=I-S_mA_m~~{\rm with}~~S_{m}:=S_{m,\omega}=\omega D_m^{-1}
\end{equation}
with a weighting  factor $\omega \in (0,1/3]$, and $D_m$ is the diagonal of $A_m$.

A multigrid process can be regarded as defining a sequence of operators $B_m:\mathcal{M}_m\mapsto \mathcal{M}_m$
which is an approximate inverses of $A_m$ in the sense that $||I-B_mA_m||$ is bounded away from one.
We list the following V-cycle multigrid algorithm \cite{Xu:97}: Algorithm \ref{MGM}.
If  $m=2$, the resulting  Algorithm  \ref{MGM} is TGM \cite{Chen:16,Xu:97}.
\begin{algorithm*}
\caption{ V-cycle Multigrid Algorithm: Define $B_1=A_1^{-1}$. Assume that $B_{m-1}:\mathcal{M}_{m-1}\mapsto \mathcal{M}_{m-1}$ is defined.
We shall now define $B_m:\mathcal{M}_{m}\mapsto \mathcal{M}_{m}$ as an approximate iterative solver for the equation associated with $A_m\nu^m=f_m$.}
\label{MGM}
\begin{algorithmic}[1]
\STATE Pre-smooth: Let $S_{m,\omega}$ be defined by (\ref{3.6}) and   $\nu^m_0=0$, $l=1: m_1$
  $$\nu^m_l=\nu^m_{l-1}+S_{m,\omega_{pre}}(f_m-A_m\nu^m_{l-1}).$$
\STATE Coarse grid correction: $e^{m-1} \in \mathcal{M}_{m-1}$ is the approximate solution of the residual equation $A_{m-1}e=I_m^{m-1}(f_m-A_m\nu^m_{m_1})$
by the iterator $B_{m-1}$:
$$e^{m-1}=B_{m-1}I_m^{m-1}(f_m-A_m\nu^m_{m_1}).$$
\STATE Post-smooth:~~$\nu^m_{m_1+1}=\nu^m_{m_1}+I_{m-1}^{m}e^{m-1}$  and $l=m_1+2: m_1+ m_2$
$$\nu^m_l=\nu^m_{l-1}+S_{m,\omega_{post}}(f_m-A_m\nu^m_{l-1}).$$
\STATE Define $B_mf_m=\nu^m_{m_1+m_2}$.
\end{algorithmic}
\end{algorithm*}

\section{Convergence of TGM for nonlocal model}
Now, we start to prove the convergence of the TGM for nonlocal model. First, we give some Lemmas that will be used.
\begin{lemma}\cite[p.\,5]{Chan:07} \label{lemmma4.1}
Given  $n\times n$ symmetric matrices $P$ and $Q$  and let $P'$ be a  principal submatrix of $P$ of order $n-1$.
  Then, for $m=1,2,\ldots,n$,
\begin{eqnarray}
\label{4.1}
 && \lambda_k(P)+ \lambda_1(Q-P) \leq \lambda_k(Q) \leq \lambda_k(P) + \lambda_n(Q-P),\\
 \label{4.2}
&& \lambda_1(P)\leq \lambda_1(P')\leq \lambda_2 (P) \leq \lambda_2 (P') \leq \cdots \leq\lambda_{n-1}(P')\leq\lambda_n(P),\\
&&
\label{4.3}
\lambda_{\min}(P)=\lambda_1(P)=\min_{x\neq 0}\frac{x^TPx}{x^Tx},~~ \lambda_{\max}(P)=\lambda_n(P)=\max_{x\neq 0}\frac{x^TPx}{x^Tx}.
\end{eqnarray}
\end{lemma}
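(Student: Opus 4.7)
The statement bundles three classical facts from symmetric matrix theory (Rayleigh quotient characterization, Cauchy interlacing, and Weyl's perturbation bound), and the standard unifying tool is the Courant--Fischer min--max principle. My plan is therefore to derive (\ref{4.3}) directly from the spectral decomposition, lift it to the min--max characterization, and then deduce (\ref{4.1}) and (\ref{4.2}) as corollaries.

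First I would establish (\ref{4.3}). Since $P$ is symmetric, write $P=U\Lambda U^T$ with $U$ orthogonal and $\Lambda=\mathrm{diag}(\lambda_1,\ldots,\lambda_n)$ where $\lambda_1\le\cdots\le\lambda_n$. Setting $y=U^T x$ yields
\begin{equation*}
\frac{x^TPx}{x^Tx}=\frac{\sum_{i=1}^n \lambda_i y_i^2}{\sum_{i=1}^n y_i^2},
\end{equation*}
which is a convex combination of the $\lambda_i$; the extremes are attained at the eigenvectors corresponding to $\lambda_1$ and $\lambda_n$. This gives (\ref{4.3}) immediately. The same argument, restricted to the orthogonal complement of the top (or bottom) $k-1$ eigenvectors, yields the full Courant--Fischer formula
\begin{equation*}
\lambda_k(P)=\min_{\dim V=k}\ \max_{0\neq x\in V}\frac{x^TPx}{x^Tx}=\max_{\dim W=n-k+1}\ \min_{0\neq x\in W}\frac{x^TPx}{x^Tx}.
\end{equation*}

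For Weyl's inequality (\ref{4.1}), note that for any unit vector $x$, $x^TQx=x^TPx+x^T(Q-P)x$, and the second summand lies in $[\lambda_1(Q-P),\lambda_n(Q-P)]$ by (\ref{4.3}). Inserting this pointwise bound into the min--max formula for $\lambda_k(Q)$ and using the corresponding min--max formula for $\lambda_k(P)$ gives both inequalities in (\ref{4.1}) after taking the optimizing subspace.

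For Cauchy interlacing (\ref{4.2}), assume without loss of generality that $P'$ is obtained from $P$ by deleting the last row and column, and identify $\mathbb{R}^{n-1}$ with the hyperplane $H=\{x\in\mathbb{R}^n : x_n=0\}$. If $V\subset\mathbb{R}^{n-1}$ has dimension $k$, then its embedding $\tilde V\subset H$ also has dimension $k$, and $x^TP'x=\tilde x^TP\tilde x$ for the corresponding vectors, giving $\lambda_k(P')\ge \lambda_k(P)$ via the min--max principle. Conversely, any $(n-k)$-dimensional $W\subset\mathbb{R}^n$ intersects $H$ in a subspace of dimension at least $n-k-1$, which implies $\lambda_k(P')\le \lambda_{k+1}(P)$ by the dual max--min formula. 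The main (and only) subtlety is this dimension-counting step; once it is in place, the chain in (\ref{4.2}) follows by applying the two bounds alternately for $k=1,\ldots,n-1$.
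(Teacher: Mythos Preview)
The paper does not prove Lemma~\ref{lemmma4.1}; it simply quotes it from \cite[p.\,5]{Chan:07} as a standard fact, so there is no in-paper argument to compare against. Your approach via the spectral decomposition and the Courant--Fischer min--max principle is exactly the standard route and is correct in outline.

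One small slip: in your interlacing step you write that an $(n-k)$-dimensional $W\subset\mathbb{R}^n$ meets $H$ in dimension at least $n-k-1$ and then invoke the max--min formula, but $n-k-1$ is the dimension relevant to $\lambda_{k+1}(P')$, not $\lambda_k(P')$. The clean argument for $\lambda_k(P')\le\lambda_{k+1}(P)$ is simply that every $(n-k)$-dimensional subspace of $H$ is already an $(n-k)$-dimensional subspace of $\mathbb{R}^n$, so the max--min over the former is bounded above by the max--min over the latter; no intersection count is needed. (Equivalently, use the min--max side: any $(k+1)$-dimensional $V\subset\mathbb{R}^n$ meets $H$ in dimension $\ge k$.) With that adjustment the proof is complete.
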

\begin{definition}\cite[p.\,13]{Chan:07}\label{definition4.2}
Let  $n \times n$ Toeplitz  matrix  $T_n$ be of the following form:
\begin{equation*}
T_n=\left [ \begin{matrix}
                      t_0           &      t_{-1}             &      \cdots         &       t_{2-n}       &       t_{1-n}      \\
                      t_{1}         &      t_{0}              &      t_{-1}         &      \cdots         &       t_{2-n}        \\
                     \vdots         &      t_{1}              &      t_{0}          &      \ddots         &        \vdots            \\
                     t_{n-2}        &      \cdots             &      \ddots         &      \ddots         &        t_{-1}    \\
                     t_{n-1}        &       t_{n-2}           &      \cdots         &       t_1           &        t_0
 \end{matrix}
 \right ];
\end{equation*}
i.e., $t_{i,j}=t_{i-j}$ and $T_n$ is constant along its diagonals. Assume that the diagonals $\{t_k\}_{k=-n+1}^{n-1}$ are the Fourier coefficients of a function
$f$, i.e.,
\begin{equation*}
  t_k=\frac{1}{2\pi}\int_{-\pi}^{\pi}f(x)e^{-ikx}dx.
\end{equation*}
Then the function $f$ is called the generating function of $T_n$.
\end{definition}

\begin{lemma}\cite[p.\,13-15]{Chan:07}\label{lemma4.3} (Grenander-Szeg\"{o} theorem) Let $T_n$ be given by above matrix with a generating function $f$,
where $f$ is a $2\pi$-periodic continuous real-valued functions defined on $[-\pi,\pi]$.
Let $\lambda_{\min}(T_n)$ and $\lambda_{\max}(T_n)$ denote the smallest and largest eigenvalues of $T_n$, respectively. Then we have
\begin{equation*}
  f_{\min} \leq \lambda_{\min}(T_n) \leq \lambda_{\max}(T_n) \leq f_{\max},
\end{equation*}
where $f_{\min}$ and  $f_{\max}$  is the minimum and maximum values of $f(x)$, respectively.
Moreover, if $f_{\min}< f_{\max}$, then all eigenvalues of $T_n$ satisfy
\begin{equation*}
  f_{\min} < \lambda(T_n) < f_{\max},
\end{equation*}
for all $n>0$. In particular, if $f_{\min}>0$, then $T_n$ is positive definite.
\end{lemma}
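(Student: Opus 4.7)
The plan is to use the Rayleigh quotient characterization from (\ref{4.3}) together with the Fourier-series definition of the generating function, converting the quadratic form $x^*T_nx$ into an integral that can be sandwiched between $f_{\min}$ and $f_{\max}$.

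First I would compute, for any $x=(x_1,\ldots,x_n)^T\in\mathbb{C}^n$, the quadratic form
\begin{equation*}
x^*T_n x=\sum_{i,j=1}^{n}\bar{x}_i t_{i-j}x_j=\frac{1}{2\pi}\int_{-\pi}^{\pi}f(\theta)\sum_{i,j=1}^{n}\bar{x}_i x_j e^{-i(i-j)\theta}\,d\theta=\frac{1}{2\pi}\int_{-\pi}^{\pi}f(\theta)\,|p_x(\theta)|^2\,d\theta,
\end{equation*}
where $p_x(\theta):=\sum_{k=1}^{n}x_k e^{ik\theta}$ is the trigonometric polynomial associated with $x$. By Parseval's identity, $\frac{1}{2\pi}\int_{-\pi}^{\pi}|p_x(\theta)|^2d\theta=\sum_{k=1}^{n}|x_k|^2=x^*x$. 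Sandwiching $f$ between its extrema then yields
\begin{equation*}
f_{\min}\,x^*x\;\le\;x^*T_n x\;\le\;f_{\max}\,x^*x,
\end{equation*}
and plugging these into (\ref{4.3}) delivers the weak inequality $f_{\min}\le\lambda_{\min}(T_n)\le\lambda_{\max}(T_n)\le f_{\max}$, as well as positive definiteness when $f_{\min}>0$.

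For the strict inequality under the hypothesis $f_{\min}<f_{\max}$, I would argue by contradiction. Suppose $\lambda_{\min}(T_n)=f_{\min}$, and let $x\neq 0$ be a corresponding eigenvector. Then $x^*T_n x=f_{\min}\,x^*x$, equivalently
\begin{equation*}
\frac{1}{2\pi}\int_{-\pi}^{\pi}\bigl(f(\theta)-f_{\min}\bigr)\,|p_x(\theta)|^2\,d\theta=0.
\end{equation*}
Since the integrand is nonnegative and continuous, it must vanish identically. But $p_x$ is a nonzero trigonometric polynomial of degree at most $n$, hence has only finitely many zeros in $[-\pi,\pi]$; therefore $f(\theta)=f_{\min}$ on the open dense complement of those zeros, and continuity of $f$ forces $f\equiv f_{\min}$, contradicting $f_{\min}<f_{\max}$. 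The same reasoning applied to $f_{\max}-f(\theta)$ handles the upper eigenvalue.

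The main obstacle is precisely this last step: the elementary estimate gives only a non-strict bound, and making it strict requires a qualitative rigidity argument exploiting that eigenvectors of $T_n$ produce trigonometric polynomials of bounded degree whose zero set cannot be too large. Once the polynomial/continuity argument is in place, the conclusion on positive definiteness is immediate, since $f_{\min}>0$ implies the sandwich bound yields $x^*T_n x\ge f_{\min}\|x\|_2^2>0$ for every $x\neq 0$.
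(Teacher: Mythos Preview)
Your proof is correct and is essentially the standard textbook argument for the Grenander--Szeg\"{o} theorem. Note, however, that the paper does not give its own proof of this lemma: it is quoted with a citation to \cite[p.\,13--15]{Chan:07}, so there is no ``paper's proof'' to compare against. Your derivation---expressing $x^*T_nx$ as $\frac{1}{2\pi}\int f(\theta)|p_x(\theta)|^2\,d\theta$, applying Parseval, and then using the finite zero set of the trigonometric polynomial $p_x$ together with continuity of $f$ for the strict inequality---is exactly the classical route that the cited reference follows.
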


\begin{lemma}\label{lemma4.4}
Let the discrete Laplacian-like operators  $\{L_j\}_{j=1}^{N-1}$ be defined by
\begin{equation}\label{4.4}
L_{j}=\left [ \begin{matrix}
2      &\overset{j-1 \text{  zeros }}{\overbrace{  \cdots }} &   -1       &           &             \\
\ddots &      \ddots                                                    &  \ddots    &  \ddots   &              \\
 -1    &     \ddots                                                &  \ddots    &  \ddots   &      -1       \\
       &             \ddots                                        &  \ddots    &   \ddots  &     \ddots     \\
       &                                                           &    -1      &   \ddots  &      2
 \end{matrix}
 \right ]_{N \times N}~~{ with}~~\quad  1\leq j \leq N-1.
\end{equation}
Then, the smallest eigenvalues of $L_j$ satisfy
\begin{equation*}
  \lambda_1(L_j)\geq 4\sin^2\left(\frac{\pi  }{2\left(\lceil N/j\rceil +1\right)}\right),~~j=1,2,\ldots N-1.
\end{equation*}
Moreover, if $N/j$ is an integer
\begin{equation*}
  \lambda_k(L_j)= 4\sin^2\left(\frac{k\pi }{2(N/j +1)} \right), ~~k=1,2,\ldots N/j.
\end{equation*}

\end{lemma}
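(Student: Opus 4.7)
The plan is to exploit the fact that $L_j$ only couples indices $i$ and $k$ with $|i-k|=j$, so after a suitable symmetric permutation $L_j$ becomes block diagonal, with each block a standard tridiagonal discrete Laplacian whose spectrum is known in closed form.

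First I would partition $\{1,2,\ldots,N\}$ into the $j$ residue classes modulo $j$, namely $C_s=\{s,s+j,s+2j,\ldots\}\cap\{1,\ldots,N\}$ for $s=1,\ldots,j$. Writing $N=mj+q$ with $0\le q<j$, the classes $C_1,\ldots,C_q$ each have $m+1=\lceil N/j\rceil$ elements and $C_{q+1},\ldots,C_j$ each have $m=\lfloor N/j\rfloor$ elements (the first statement being vacuous when $q=0$). Let $\Pi$ be the permutation matrix that reorders the indices class by class. Because the only off-diagonal nonzeros of $L_j$ lie at positions $(s+\ell j,\,s+(\ell+1)j)$ and their transposes, the conjugated matrix $\Pi L_j \Pi^{T}$ decouples into a block diagonal matrix $\mathrm{diag}(B_1,\ldots,B_j)$, where each $B_s$ is the classical $|C_s|\times|C_s|$ tridiagonal matrix with $2$ on the diagonal and $-1$ on the sub- and super-diagonals.

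Next I would invoke the well-known spectral decomposition of the one-dimensional discrete Laplacian: $\mathrm{tridiag}(-1,2,-1)$ of size $n$ has eigenvalues $4\sin^2\bigl(k\pi/(2(n+1))\bigr)$ for $k=1,\ldots,n$. The spectrum of $L_j$ is therefore the disjoint union (with multiplicities) of the spectra of the $B_s$. Since $n\mapsto 4\sin^2\bigl(\pi/(2(n+1))\bigr)$ is strictly decreasing, the smallest eigenvalue over all blocks is attained on the largest block, whose size is $\lceil N/j\rceil$, yielding $\lambda_1(L_j)=4\sin^2\bigl(\pi/(2(\lceil N/j\rceil+1))\bigr)$, which is stronger than the claimed inequality. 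When $j\mid N$, every block has the common size $N/j$, so $L_j$ is orthogonally similar to $j$ copies of the same tridiagonal matrix, and the explicit formula for $\lambda_k(L_j)$ for $k=1,\ldots,N/j$ follows at once.

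The only real obstacle is bookkeeping: one must verify carefully that the off-diagonal contributions of $L_j$ respect the residue-class partition (i.e., no stray couplings survive) and that the block sizes match $\lceil N/j\rceil$ and $\lfloor N/j\rfloor$ when $j\nmid N$. There is no analytical difficulty beyond the monotonicity of $n\mapsto 4\sin^2\bigl(\pi/(2(n+1))\bigr)$ and the standard eigenbasis of the discrete Laplacian.
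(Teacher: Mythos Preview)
Your argument is correct and takes a genuinely different route from the paper. The paper first constructs explicit eigenvectors of the $jN\times jN$ matrix $L_j$ by padding the eigenvectors of $L_1$ with zeros in the $j$ possible interleaved positions, thereby identifying its full spectrum; it then ``rescales dimension'' to handle the $N\times N$ case when $j\mid N$. For the case $j\nmid N$ the paper enlarges $N$ to $\widetilde N=j\lceil N/j\rceil$, views $\{L_j\}_{N\times N}$ as a principal submatrix of $\{L_j\}_{\widetilde N\times\widetilde N}$, and appeals to Cauchy interlacing (inequality~(\ref{4.2})) to obtain only the lower bound $\lambda_1(\{L_j\}_{N\times N})\ge\lambda_1(\{L_j\}_{\widetilde N\times\widetilde N})$. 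Your block-diagonalization via the residue-class permutation is more direct: it yields the entire spectrum of $L_j$ for every $N$ at once, and in particular gives the \emph{equality} $\lambda_1(L_j)=4\sin^2\bigl(\pi/(2(\lceil N/j\rceil+1))\bigr)$, which is strictly sharper than the stated inequality. Your approach is both more elementary (no interlacing lemma is needed) and more informative; the paper's route, by contrast, showcases how interlacing can substitute for an explicit decomposition when one only needs a one-sided bound.
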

\begin{proof}
Let $\nu_1^{k,1}=[\nu_1^k,\nu_2^k,\ldots,\nu_N^k]^T$ be the associated eigenvector with the tridiagonal  matrix $L_1$.
It is well known that its eigenvalues are given by \cite[p.\,702]{Stoer:02}
\begin{equation*}
  \lambda_{k,1}=4\sin^2\left(\frac{k\pi }{2(N+1)} \right), ~k=1,2,\ldots  N.
\end{equation*}
Define
$$\nu_i^{k,j}=[\underset{ {i-1}}{\underbrace{  0 \cdots 0 }},\nu_1^k,\underset{ {j-i}}
{\underbrace{  0 \cdots 0 }},\underset{ {i-1}}{\underbrace{  0 \cdots 0 }},\nu_2^k,\underset{ {j-i}}
{\underbrace{  0 \cdots 0 }},\ldots,\underset{ {i-1}}{\underbrace{  0 \cdots 0 }},\nu_N^k,\underset{ {j-i}}{\underbrace{  0 \cdots 0 }}]^T,~~i=1,2,\cdots,j.$$
Then, for the  matrix $L_j$ with dimension ${(jN)}\times{(jN)}$, we have
\begin{equation*}
   \{L_j \}_{{(jN)}\times{(jN)}}\,\nu_i^{k,j}=\lambda_{k,1} \nu_i^{k,j}=4\sin^2\left(\frac{k\pi }{2(N+1)} \right)\nu_i^{k,j}, ~~ i=1,2,\cdots,j,
\end{equation*}
leading to all eigenvalues with multiplicity $j$ and eigenvectors of  $\{L_j\}_{{(jN)}\times{(jN)}}$.
A dimension rescaling then shows that
\begin{equation}\label{4.5}
   \lambda_k(\{L_j\}_{{N}\times{N}}):=\lambda_{k,j}=4\sin^2\left(\frac{k\pi }{2(N/j +1)}\right), ~k=1,2,\ldots  N/j
\end{equation}
if $N/j$ is an integer.

If  $N/j$ is not an integer,  we extend  $N$ to $\widetilde{N}$ such that $\widetilde{N}/j$ is an integer, i.e.,
 $$\widetilde{N}/j:=\lceil N/j\rceil=\frac{N+j-\!\!\!\!\!\!\mod(N,j)}{j}\, ,$$
where $\mod(N,j)$ means the remainder of division of $N$ by $j$.

From (\ref{4.5}) and (\ref{4.2}), we obtain
\begin{equation*}
  \lambda_{1}(\{L_j\}_{\widetilde{N}\times\widetilde{N}})=4\sin^2\left(\frac{\pi }{2(\widetilde{N}/j +1)}\right)
  =4\sin^2\left(\frac{\pi  }{2\left(\lceil N/j\rceil +1\right)}\right),
\end{equation*}
and
\begin{equation*}
  \lambda_{1}(\{L_j\}_{{N}\times{N}}) \geq \lambda_{1}(\{L_j\}_{\widetilde{N}\times\widetilde{N}})
  =4\sin^2\left(\frac{\pi  }{2\left(\lceil N/j\rceil +1\right)}\right).
\end{equation*}
The proof is completed.
\end{proof}

\begin{lemma}\label{lemma4.5}
Let the matrix  $A_\delta^h$ be defined by (\ref{2.11}) and (\ref{2.15}) on a finite bar $\Omega=(0,b)$, $b>0$.
Let  $\delta=ch^\beta$,  $\beta\geq 0$, $h\rightarrow 0$ and $c>0$. Then
\begin{equation*}
  \lambda_{\min}(A_\delta^h)\geq \frac{1}{27b^2}.
\end{equation*}
\end{lemma}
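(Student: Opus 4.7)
The plan is to split $A_\delta^h$ into a sum of discrete Laplacian-like matrices of the form $L_j$ from Lemma \ref{lemma4.4} and then combine that lemma with a ``second-order consistency'' identity satisfied by the coefficients $a_j$. Since $a_p \le 0$ for every $p=1,\ldots,r+1$ (easily checked from (\ref{2.15}) once one factors $r^2+rR-2R^2=-(2R+r)(R-r)$ and $2R^2-rR-r^2=(2R+r)(R-r)$), and $a_0 = -2\sum_{p=1}^{r+1} a_p = 2\sum_{p=1}^{r+1}|a_p|$, a direct comparison of the diagonal and each band $|i-k|=j$ yields
\[
A_\delta^h \;=\; \sum_{j=1}^{r+1} |a_j|\,L_j,
\]
which is legal because $r+1 \le N$ once $h$ is small enough ($\delta<b$). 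Each $L_j$ is positive semidefinite by the proof of Lemma \ref{lemma4.4}, so iterating the Weyl-type inequality from Lemma \ref{lemmma4.1} gives $\lambda_{\min}(A_\delta^h) \ge \sum_{j=1}^{r+1} |a_j|\,\lambda_{\min}(L_j)$.

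Applying Lemma \ref{lemma4.4} together with $\sin\theta \ge 2\theta/\pi$ on $[0,\pi/2]$ and the coarse bound $\lceil N/j\rceil + 1 \le 3N/j$ (valid since $1\le j \le r+1 \le N$), I obtain
\[
\lambda_{\min}(L_j) \;\ge\; \frac{4}{(\lceil N/j\rceil + 1)^2} \;\ge\; \frac{4j^2}{9N^2},
\]
so that
\[
\lambda_{\min}(A_\delta^h) \;\ge\; \frac{4}{9N^2}\sum_{j=1}^{r+1} j^2 |a_j|.
\]
Everything now hinges on the identity
\[
\sum_{j=1}^{r+1} j^2\,|a_j| \;=\; \frac{1}{h^2},
\]
which reflects that the scheme (\ref{2.7}) is exact on quadratic polynomials. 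Conceptually, testing $A_\delta^h$ on the vector $v_k=(x_k-x_{i_0})^2$ at an interior index $i_0$ gives $2h^2\sum_{p=1}^{r+1}p^2 a_p$, while the partition-of-unity identity $\sum_{p=1}^{r+1} p\,\phi_p(s) = s/h$ on $(0,\delta)$ makes the quadrature in (\ref{2.7}) exact on $u(x)=x^2$, producing $-\mathcal{L}_\delta(x^2)|_{x_{i_0}}=-2\int_0^\delta s^2\gamma_\delta(s)\,ds = -2$.

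The main obstacle is justifying this identity rigorously in closed form. A self-contained algebraic route is to write $\sum_{j=1}^{r+1}j^2|a_j| = T/(2h^2R^3)$ with
\[
T = (r-1)r(2r-1) + r\bigl[3r-1+(R-r)(r^2+rR-2R^2+3r+3R)\bigr] + (r+1)(R-r)(2R^2-rR-r^2);
\]
the two factorizations noted above collapse $T$ cleanly to $2R^3$, giving the identity. With this in hand,
\[
\lambda_{\min}(A_\delta^h) \;\ge\; \frac{4}{9(Nh)^2} \;\ge\; \frac{4}{9b^2} \;\ge\; \frac{1}{27b^2}
\]
follows from $Nh = b-h < b$, with ample slack in every inequality. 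The case $R\le 1$ follows immediately from (\ref{2.14}) since then $|a_1|=1/h^2$ and $r+1\le 2$, so the sum is trivially $1/h^2$.
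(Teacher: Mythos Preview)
Your proof is correct and shares its skeleton with the paper's: both write $A_\delta^h=\sum_{j=1}^{r+1}|a_j|L_j$, invoke Lemma~\ref{lemma4.4} for $\lambda_{\min}(L_j)$, and use $\sin\theta\ge 2\theta/\pi$ together with $\lceil N/j\rceil+1\le 3N/j$. The genuine difference is in how the resulting sum $\sum_j|a_j|\lambda_{\min}(L_j)$ is controlled. The paper discards the $j=r$ and $j=r+1$ contributions, uses that $|a_j|=3/(h^2R^3)$ is constant for $1\le j\le r-1$, and then bounds $\sum_{j=1}^{r-1}j^2$ below by $(r+1)^3/36$ via the crude inequalities $r-1\ge(r+1)/3$, $r\ge(r+1)/2$, $2r-1\ge r+1$ (this forces the case split $r\le1$ versus $r\ge2$). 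You instead keep all terms and use the exact consistency identity $\sum_{j=1}^{r+1}j^2|a_j|=1/h^2$, which you justify both conceptually (exactness of~(\ref{2.7}) on quadratics via $\sum_p p\,\phi_p(s)=s/h$, together with the normalization $\int_0^\delta s^2\gamma_\delta(s)\,ds=1$) and by direct algebra. Your route is cleaner: it avoids the case split in the main estimate, yields the sharper bound $\lambda_{\min}(A_\delta^h)\ge 4/(9b^2)$ rather than $1/(27b^2)$, and, because the identity $\sum_j j^2|a_j|=h^{-2}\int_0^\delta s^2\gamma_\delta(s)\,ds$ holds for any kernel, it generalizes immediately to other $\gamma_\delta$ satisfying the same second-moment normalization. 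The paper's argument, by contrast, is slightly more elementary in that it never needs the explicit values of $a_r$ and $a_{r+1}$, only their sign.
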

\begin{proof}
According the definition of $L_j$ given in Lemma \ref{lemma4.4}, we can recast (\ref{2.11}) with its elements defined by (\ref{2.15}) as
\begin{equation}\label{4.6}
  A_\delta^h=-a_1L_1-a_2L_2\cdots -a_{r+1}L_{r+1}=-\sum_{j=1}^{r+1}a_jL_j,
\end{equation}
where $\{a_j\}$ are entries on different off-diagonals.
We should  check  the following two  cases: $r\leq 1$ and $r\geq 2$.

Case 1: $r\leq 1$.  From (\ref{2.14}) and (\ref{4.6}), we obtain
$$A_\delta^h=-a_1L_1,$$ which means that
$$\lambda_{\min}(A_\delta^h)=-a_1\lambda_{\min}(L_{1})
=\frac{1}{h^2}\cdot4\sin^2\left(\frac{\pi  }{2\left( N +1\right)}\right)
=\frac{1}{h^2}\cdot4\sin^2\left(\frac{\pi  h}{2b}\right)=\frac{\pi^2}{b^2}+\mathcal{O} (h^2).$$

Case 2: $r\geq 2$. Using
$$\frac{2}{\pi}x \leq \sin(x) \leq x,~ x\in\left[0,\frac{\pi}{2}\right],$$
we obtain
\begin{equation*}
\begin{split}
\lambda_{\min}(A_\delta^h)
&\geq -\sum_{j=1}^{r+1}a_j\lambda_{\min}(L_j)
\geq  \frac{1}{h^2}\frac{3}{R^3}\sum_{j=1}^{r-1}4\sin^2\left(\frac{\pi  }{2\left(\lceil N/j\rceil +1\right)}\right)\\
&\geq  \frac{1}{h^2}\frac{3}{R^3}\sum_{j=1}^{r-1}4\sin^2\left(\frac{\pi  }{2\left( N/j +2\right)}\right)
\geq  \frac{1}{h^2}\frac{3}{R^3}\sum_{j=1}^{r-1}4\sin^2\left(\frac{j\pi  }{6N}\right)\\
&\geq  \frac{1}{h^2}\frac{3}{R^3}\sum_{j=1}^{r-1}4\sin^2\left(\frac{j\pi h }{6b}\right)
\geq  \frac{4}{3b^2R^3}\sum_{j=1}^{r-1}j^2\\
&\geq \frac{4}{3b^2(r+1)^3}\cdot\frac{1}{6}\cdot\frac{r+1}{3}\cdot\frac{r+1}{2}\cdot\frac{2(r+1)}{2}=\frac{1}{27b^2}.
\end{split}
\end{equation*}
The proof is completed.
\end{proof}
\begin{remark}
From (\ref{4.6}), we  know that the discrete nonlocal operator $A_\delta^h$ can be viewed as  the superposition of discrete Laplacian-like operators  $\{L_j\}_{j=1}^{r+1}$; and it  reduces to the classical discrete Laplacian operator  when $r=0$ or $1$.
\end{remark}

\begin{lemma}\label{lemma4.6}
Let the matrix  $A_\delta^h$ be defined by  (\ref{2.11}) and (\ref{2.15}) on a finite bar $\Omega=(0,b)$, $b>0$.
Let  $\delta=ch^\beta$,  $\beta\geq 0$, $h\rightarrow 0$ and $c>0$. Then,
 there exists the bound of the condition number
$${\rm cond}\,(A_\delta^h)=\frac{\lambda_{\max}(A_\delta^h)}{\lambda_{\min}(A_\delta^h)} \leq c_* \min\{\delta^{-2},h^{-2}\},$$
where $c_*$ is a positive constant. %is  independent of $h$, $\beta$ and $c$.
\end{lemma}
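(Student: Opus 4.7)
The plan is to combine the uniform lower bound on $\lambda_{\min}(A_\delta^h)$ supplied by Lemma \ref{lemma4.5} with an upper bound on $\lambda_{\max}(A_\delta^h)$ obtained from Gershgorin's circle theorem. Since Lemma \ref{lemma4.5} already yields $\lambda_{\min}(A_\delta^h)\ge 1/(27b^2)$, a constant depending only on $b$, the task collapses to establishing $\lambda_{\max}(A_\delta^h)\le C\min\{\delta^{-2},h^{-2}\}$ for some absolute constant $C$.

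First I would observe from (\ref{2.14})--(\ref{2.15}) that $A_\delta^h$ has positive diagonal entry $a_0=2\sum_{p=1}^{r+1}|a_p|$ and nonpositive off-diagonal entries, so every absolute row sum is bounded above by $2a_0$ (with equality on interior rows and a strictly smaller value on boundary rows, where part of the band has been truncated). By Gershgorin this forces $\lambda_{\max}(A_\delta^h)\le 2a_0$, and the whole problem reduces to estimating the single scalar $a_0$.

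Next I would split according to the two regimes distinguished by $R=\delta/h$. In the regime $R\le 1$ we have $r=0$ and (\ref{2.14}) gives $a_0=2/h^2$, so $\lambda_{\max}\le 4/h^2$; combined with $\delta\le h$, which gives $\min\{\delta^{-2},h^{-2}\}=h^{-2}$, this settles the case. In the regime $R>1$ we have $r\ge 1$, and (\ref{2.15}) yields
\begin{equation*}
a_0 \;=\; 2\sum_{p=1}^{r-1}|a_p|\;+\;2|a_r|\;+\;2|a_{r+1}|.
\end{equation*}
The first sum equals $6(r-1)/(h^2R^3)$ and is bounded by $6/\delta^2$ by using $r-1\le R$ together with the identity $hR=\delta$. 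For $|a_r|$ and $|a_{r+1}|$, the essential observation is $R-r\in[0,1)$ together with the fact that the polynomial numerators $r^2+rR-2R^2+3r+3R$ and $2R^2-rR-r^2$ are each of size $O(R^2)$; after dividing by $h^2R^3\cdot r$ or $h^2R^3\cdot(r+1)$, and noting that $r$ and $r+1$ are comparable to $R$, these two terms are likewise $O(1/\delta^2)$. Hence $a_0\le C/\delta^2$, and since $R>1$ gives $\delta>h$, i.e.\ $\min\{\delta^{-2},h^{-2}\}=\delta^{-2}$, the estimate holds.

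The main obstacle I expect is the careful bookkeeping for the two boundary entries $a_r$ and $a_{r+1}$: their rational expressions in (\ref{2.15}) look delicate, and one must verify that no hidden blow-up occurs as $R\to r^+$ or $R\to (r+1)^-$. Once it is checked, via the inequality $R-r<1$ (which forces the polynomial numerators to be uniformly $O(R^2)$) and the comparability $r\sim R$, that both $|a_r|$ and $|a_{r+1}|$ are controlled by $C/\delta^2$, combining with the lower bound from Lemma \ref{lemma4.5} immediately produces ${\rm cond}(A_\delta^h)\le c_*\min\{\delta^{-2},h^{-2}\}$, completing the argument.
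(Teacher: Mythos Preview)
Your proposal is correct and follows essentially the same approach as the paper: combine Lemma~\ref{lemma4.5} for the lower bound with a Gershgorin bound $\lambda_{\max}\le 2a_0$ and then estimate $a_0$. The paper organizes the case split by the value of $\beta$ rather than by $R$, and in one sub-case invokes the decomposition $A=-\sum a_jL_j$ with $\lambda_{\max}(L_j)\le4$ in place of Gershgorin, but these yield the same bound; the paper's inequality $a_0\le 12/\delta^2$ in~(\ref{4.7}) is asserted without the explicit verification of the $a_r,a_{r+1}$ terms that you correctly flag as needing care.
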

\begin{proof}
Case 1: $\beta> 1.$ From (\ref{2.5}), there exists
\begin{equation*}
  R=\frac{\delta}{h}=ch^{\beta-1}\leq c_0  ~~{\rm with}~~c_0  ~~{\rm a ~~constant}.
\end{equation*}
If $ R \leq 1$,  from (\ref{2.14}) and (\ref{4.6}), we obtain
$$A_\delta^h=-a_1L_1,$$
which means that
$$\lambda_{\max}(A_\delta^h)=-a_1\lambda_{\max}(L_{1})
=\frac{1}{h^2}\cdot4\sin^2\left(\frac{N\pi  }{2\left( N +1\right)}\right)
\leq \frac{4}{h^2}.$$
If $ 1< R \leq c_0$,  according to (\ref{4.6}), (\ref{4.1}),  (\ref{4.4}) and  (\ref{2.15}),  there exists
\begin{equation*}
\lambda_{\max}(A_\delta^h) \leq -\sum_{j=1}^{r+1}a_j\lambda_{\max}(L_j)
                  \leq  \frac{1}{h^2}\frac{3}{R^3}\sum_{j=1}^{r+1}4
                  \leq \frac{12(R+1)}{R^3}\frac{1}{h^2}\leq \frac{12(c_0+1)}{h^2}.
\end{equation*}

Case 2: $0\leq \beta \leq  1.$ Using  (\ref{2.15}), we obtain
  \begin{equation}\label{4.7}
\begin{split}
  a_0&=-2\sum_{m=1}^{r+1}a_{i,m} \leq   2\sum_{m=1}^{r+1}\frac{3}{h^{2}R^3} \leq \frac{12}{\delta^2}.
\end{split}
\end{equation}
From the Gerschgorin theorem \cite[p.\,133]{Thomas:95},   the eigenvalues $\lambda$ of the matrix  $A_\delta^h$ satisfy
\begin{equation*}
  \lambda_{\max}(A_\delta^h) \leq 2a_0 \leq \frac{24}{\delta^2}.
\end{equation*}
From Lemma \ref{lemma4.5}, and the discussions of Case 1 and Case 2, the desired result is obtained.
\end{proof}

Since the matrix $A^h_\delta$ is symmetric positive definite, we can define the following inner products
\begin{equation*}
  ( u,v  )_D=(Du,v), \quad (u,v)_{A}=(Au,v), \quad (u,v)_{AD^{-1}A}=(Au,Av)_{D^{-1}},
\end{equation*}
where for convenience we have dropped the explicit notational dependence on $h$ and $\delta$
so that $A:=A_J=A^h_\delta$ and $D$ is its diagonal. Here $(\cdot,\cdot)$ is the usual Euclidean inner product.
\begin{lemma} \cite[p.\,84]{Ruge:87}\label{lemma4.7}
Let $A_J$ be a symmetric positive definite. If $\eta\leq\omega(2-\omega \eta_0)$ with $\eta_0\geq\lambda_{\max}(D_J^{-1}A_J) $,  then the damped Jacobi iteration
with relaxation parameter $0 < \omega < 2/\eta_0 $ satisfies
\begin{equation}\label{4.8}
 ||K_J\nu^J||_{A_J }^2 \leq  ||\nu^J||_{A_J }^2 - \eta ||A_J \nu^J||_{D_J ^{-1}}^2 \quad  \forall \nu^J \in \mathcal{M}_J.
\end{equation}
\end{lemma}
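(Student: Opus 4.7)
The plan is to expand $\|K_J\nu^J\|_{A_J}^2$ directly, recognize that the cross terms produce exactly the prescribed decrement, and then invoke the hypothesis $\eta\le\omega(2-\omega\eta_0)$. Writing $K_J = I - \omega D_J^{-1}A_J$, I would first compute
\[
\|K_J\nu^J\|_{A_J}^2 = (A_J\nu^J,\nu^J) - \omega(A_J\nu^J, D_J^{-1}A_J\nu^J) - \omega(A_J D_J^{-1}A_J\nu^J,\nu^J) + \omega^2(A_J D_J^{-1}A_J\nu^J, D_J^{-1}A_J\nu^J).
\]
Since $A_J$ and $D_J$ are symmetric, the two linear terms in $\omega$ coincide and combine into $-2\omega(A_J\nu^J, D_J^{-1}A_J\nu^J) = -2\omega\|A_J\nu^J\|_{D_J^{-1}}^2$.

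Next I would dominate the $\omega^2$ term. Setting $w := D_J^{-1}A_J\nu^J$, that term reads $\omega^2(A_J w, w)$. The spectral hypothesis $\lambda_{\max}(D_J^{-1}A_J)\le\eta_0$, combined with the similarity of $D_J^{-1}A_J$ to the symmetric matrix $D_J^{-1/2}A_J D_J^{-1/2}$, is equivalent to the quadratic-form inequality $A_J\le\eta_0 D_J$. Applied to $w$ this yields
\[
(A_J w, w)\le\eta_0(D_J w, w) = \eta_0 (D_J \cdot D_J^{-1}A_J\nu^J,\, D_J^{-1}A_J\nu^J) = \eta_0\|A_J\nu^J\|_{D_J^{-1}}^2.
\]
Collecting the three pieces gives
\[
\|K_J\nu^J\|_{A_J}^2 \le \|\nu^J\|_{A_J}^2 - \omega(2-\omega\eta_0)\|A_J\nu^J\|_{D_J^{-1}}^2,
\]
and the hypothesis $\eta\le\omega(2-\omega\eta_0)$ together with the nonnegativity of $\|A_J\nu^J\|_{D_J^{-1}}^2$ finishes the estimate (\ref{4.8}).

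There is essentially no technical obstacle here; the entire argument is an algebraic expansion plus one spectral inequality. The only step that warrants a second glance is turning the spectral bound $\lambda_{\max}(D_J^{-1}A_J)\le\eta_0$, concerning a generally non-symmetric matrix, into the quadratic-form inequality $A_J\le\eta_0 D_J$ valid on all of $\mathcal{M}_J$; this is immediate upon noting that $D_J^{-1}A_J$ is similar to $D_J^{-1/2}A_J D_J^{-1/2}$, which is symmetric and has the same eigenvalues. I also note that the standing assumption $0<\omega<2/\eta_0$ is precisely what keeps $\omega(2-\omega\eta_0)>0$, so that a strictly positive $\eta$ meeting the hypothesis actually exists.
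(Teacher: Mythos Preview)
Your argument is correct and is exactly the standard derivation of the damped-Jacobi smoothing property. Note that the paper does not supply its own proof of this lemma; it is quoted directly from \cite[p.\,84]{Ruge:87}, and your proof is essentially the one found there.
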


\begin{lemma}\cite[p.\,89]{Ruge:87}\label{lemma4.8}
Let $A_J$ be a symmetric positive definite matrix and $K_J$ satisfies (\ref{4.8}) and
\begin{equation}\label{4.9}
   \min_{\nu^{J-1} \in \mathcal{M}_{J-1} }||\nu^{J}-I_{J-1}^J\nu^{J-1}||_{D_J}^2\leq \kappa ||\nu^J||_{A_J}^2 \quad  \forall \nu^J \in \mathcal{M}_J
\end{equation}
with  $\kappa>0$ independent of $\nu^J$. Then, $\kappa\geq \eta>0$ and the convergence factor of TGM  satisfies
\begin{equation*}
||K_JT^J||_{A_J}\leq \sqrt{1-\eta/\kappa }\quad \forall \nu^J\in \mathcal{M}_J.
\end{equation*}
\end{lemma}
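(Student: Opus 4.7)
The plan is to identify $T^J=I-I_{J-1}^J P_{J-1}$ as the $A_J$-orthogonal projection onto the $A_J$-orthogonal complement of $\operatorname{Range}(I_{J-1}^J)$. The key algebraic fact that makes this work is the Galerkin relation $A_{J-1}=I_J^{J-1}A_JI_{J-1}^J$ in (\ref{3.4}), which yields the identity
\begin{equation*}
I_J^{J-1}A_JT^J \;=\; I_J^{J-1}A_J-\bigl(I_J^{J-1}A_JI_{J-1}^J\bigr)A_{J-1}^{-1}I_J^{J-1}A_J \;=\;0.
\end{equation*}
So $A_JT^Jv$ is Euclidean-orthogonal to $\operatorname{Range}(I_{J-1}^J)$; equivalently, $T^J$ is $A_J$-self-adjoint and idempotent, hence a contraction: $\|T^Jv\|_{A_J}\le\|v\|_{A_J}$.

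Next I would convert the approximation hypothesis (\ref{4.9}) into the sharpened form $\|T^Jv\|_{A_J}^2\le\kappa\,\|A_JT^Jv\|_{D_J^{-1}}^2$. Starting from $\|T^Jv\|_{A_J}^2=(A_JT^Jv,T^Jv)$ and inserting any coarse correction $I_{J-1}^Jw^{J-1}$ via the orthogonality just established,
\begin{equation*}
\|T^Jv\|_{A_J}^2 \;=\; \bigl(A_JT^Jv,\;T^Jv-I_{J-1}^Jw^{J-1}\bigr).
\end{equation*}
A Cauchy--Schwarz step split through $D_J^{1/2}$ and $D_J^{-1/2}$ gives
\begin{equation*}
\|T^Jv\|_{A_J}^2 \;\le\; \|A_JT^Jv\|_{D_J^{-1}}\,\|T^Jv-I_{J-1}^Jw^{J-1}\|_{D_J},
\end{equation*}
and then minimizing over $w^{J-1}\in\mathcal{M}_{J-1}$ and applying (\ref{4.9}) to the vector $T^Jv$ produces the stated bound.

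Finally, I would plug the smoothing property (\ref{4.8}) into $T^Jv$:
\begin{equation*}
\|K_JT^Jv\|_{A_J}^2 \;\le\; \|T^Jv\|_{A_J}^2-\eta\,\|A_JT^Jv\|_{D_J^{-1}}^2
\;\le\; \Bigl(1-\tfrac{\eta}{\kappa}\Bigr)\|T^Jv\|_{A_J}^2
\;\le\; \Bigl(1-\tfrac{\eta}{\kappa}\Bigr)\|v\|_{A_J}^2,
\end{equation*}
where the last inequality uses that $T^J$ is an $A_J$-contraction. Taking square roots gives the convergence factor. The inequality $\kappa\ge\eta$ is then automatic: the chain of estimates forces $1-\eta/\kappa\ge 0$ (otherwise one would get $\|K_JT^Jv\|_{A_J}^2<0$ for some $v$ making $T^Jv\ne 0$), or, more directly, combining the approximation bound with the smoothing bound on the same vector yields $\|T^Jv\|_{A_J}^2\le(\kappa/\eta)\|T^Jv\|_{A_J}^2$.

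The main obstacle is the second step: setting up the right Cauchy--Schwarz splitting and, crucially, realizing that (\ref{4.9}) must be applied to $T^Jv$ rather than to $v$ itself. Once the Euclidean orthogonality $I_J^{J-1}A_JT^J=0$ is in hand, everything else is bookkeeping, but without the Galerkin assumption this orthogonality fails and the proof breaks down. I would therefore highlight (\ref{3.4}) as the structural hypothesis doing all the work.
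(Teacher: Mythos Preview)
Your proof is correct and follows precisely the classical Ruge--St\"uben argument. Note, however, that the paper does not supply its own proof of this lemma: it is quoted verbatim from \cite[p.\,89]{Ruge:87} and used as a black box, so there is no in-paper argument to compare against. What you have written is essentially the original proof from that reference---the Galerkin identity $I_J^{J-1}A_JT^J=0$ making $T^J$ an $A_J$-orthogonal projection, the $D_J^{1/2}/D_J^{-1/2}$ Cauchy--Schwarz splitting, and the application of (\ref{4.9}) to $T^Jv$ rather than $v$---so your reconstruction matches the source the paper cites.
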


We present the convergence results of TGM in Theorem \ref{theorema4.9} and Theorem \ref{theorema4.11}. The first part of the proof of Theorem \ref{theorema4.9} follows the traditional idea \cite{Chan:98,Chen:14,Pang:12}, but just the convergence result for $\beta\ge 1$ is obtained; we use a different technique to prove the case $\beta=0$. After using the new idea, the convergence results for $\beta \ge 0$ are got, being proposed in Theorem \ref{theorema4.11} .

\begin{theorem}\label{theorema4.9}
Let $A_J=A^h_\delta$ be defined by (\ref{2.10}) and (\ref{2.15}) on a finite bar $\Omega \in(0,b)$, where  $\delta=ch^\beta$,  $\beta\geq 0$, $h\rightarrow 0$ and $c>0$.
Then $K_{J}$ satisfies (\ref{4.8}) and the convergence factor of the TGM satisfies
\begin{equation*}
||K_{J} T^J||_{A_J} \leq \left\{ \begin{split}
&\sqrt{1-\eta/c_0 }<1~~~~~~~~{\rm with }~~\beta\geq 1, c_0=\max(1,2c),\\
&\sqrt{1-\eta c^2/(648 b^2)}<1~~{\rm with }~~\beta=0,
 \end{split}
 \right.
\end{equation*}
where $\eta\leq2\omega(1-\omega)$ with $0 < \omega <1$.
\end{theorem}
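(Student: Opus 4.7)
I will apply Lemma \ref{lemma4.8}, which reduces the theorem to two tasks: (i) verifying the smoothing inequality (\ref{4.8}); and (ii) verifying the approximation property (\ref{4.9}) with a constant $\kappa$ matching each claimed convergence factor. The first task is entirely routine via Lemma \ref{lemma4.7} with $\eta_0=2$: because $A_J$ is Toeplitz, its diagonal is the constant $a_0$ and $D_J=a_0 I_N$, while the identity $a_0=-2\sum_{m=1}^{r+1}a_m$ embedded in (\ref{2.15}) shows that the off-diagonal row sums of $A_J$ in absolute value equal exactly $a_0/2$. Gershgorin then gives $\lambda_{\max}(D_J^{-1}A_J)\le 3/2\le 2=\eta_0$, and any $\omega\in(0,1)$ allows the choice $\eta=2\omega(1-\omega)\le \omega(2-\omega\eta_0)$, validating (\ref{4.8}).

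\textbf{Approximation property for $\beta\ge 1$.} Here $R=ch^{\beta-1}$ is uniformly bounded, so $A_\delta^h$ has bounded bandwidth $r+1$ and behaves like a weighted superposition of Laplacian-like operators. I would choose the injection restriction $w^{J-1}_i:=\nu^J_{2i}$. Setting $\Delta_k:=\nu^J_k-\nu^J_{k-1}$ with the convention $\nu^J_0=\nu^J_{N_J+1}=0$, a direct computation of the linear-interpolation error from (\ref{3.3}) yields error $0$ at even indices and $\tfrac12(\Delta_{2i-1}-\Delta_{2i})$ at odd indices, so that
\begin{equation*}
\|\nu^J-I_{J-1}^J w^{J-1}\|^2\le \tfrac12\sum_k \Delta_k^2=\tfrac12\,(\nu^J)^T L_1\nu^J.
\end{equation*}
Invoking the decomposition (\ref{4.6}) and the positivity of each $L_j$,
\begin{equation*}
(\nu^J)^T A_J\nu^J=\sum_{j=1}^{r+1}(-a_j)\,(\nu^J)^T L_j\nu^J\ge (-a_1)\,(\nu^J)^T L_1\nu^J,
\end{equation*}
whence combining with $D_J=a_0 I$ gives $\|\nu^J-I_{J-1}^J w^{J-1}\|_{D_J}^2\le [a_0/(2(-a_1))]\,\|\nu^J\|_{A_J}^2$, so $\kappa\le a_0/(2(-a_1))$. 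A direct case check of (\ref{2.15}) then shows $a_0/(-a_1)\le 2c_0$ with $c_0=\max(1,2c)$: for $r=0$ one has $a_0/(-a_1)=2$; for $r\ge 2$ one uses $-a_1=3/(h^2R^3)$ together with $a_0\le 12/\delta^2$ from the proof of Lemma \ref{lemma4.6} to get $a_0/(-a_1)\le 4c$; the transitional case $r=1$ is handled by direct substitution in (\ref{2.15}). Lemma \ref{lemma4.8} then delivers the first bound.

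\textbf{Approximation property for $\beta=0$.} Now $\delta=c$ is fixed and $R=c/h\to\infty$, but by Lemmas \ref{lemma4.5} and \ref{lemma4.6} the matrix $A_\delta^h$ has a uniformly bounded condition number. Hence the trivial choice $w^{J-1}=0$ suffices: using $\|\nu^J\|_{D_J}^2=a_0\|\nu^J\|^2\le \lambda_{\max}(A_\delta^h)\|\nu^J\|^2\le (24/c^2)\|\nu^J\|^2$ from Lemma \ref{lemma4.6} and $\|\nu^J\|_{A_J}^2\ge \lambda_{\min}(A_\delta^h)\|\nu^J\|^2\ge \|\nu^J\|^2/(27b^2)$ from Lemma \ref{lemma4.5}, one immediately obtains $\kappa\le (24/c^2)(27b^2)=648\,b^2/c^2$, and Lemma \ref{lemma4.8} yields the second bound.

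\textbf{Anticipated main obstacle.} The only nontrivial estimate is the uniform bound $a_0/(-a_1)\le 2c_0$ in the $\beta\ge 1$ regime, since the formulas in (\ref{2.15}) change structure across the ranges $R\le 1$, $1<R<2$, and $R\ge 2$; each must be verified separately. Everything else is mechanical once the decomposition (\ref{4.6}) and the spectral bounds in Lemmas \ref{lemma4.5}--\ref{lemma4.6} are in place. The $\beta=0$ branch is easy precisely because the uniform condition-number bound collapses the approximation property to a scalar inequality.
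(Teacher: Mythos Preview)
Your argument is correct and, for the case $\beta\ge 1$, follows the paper's proof essentially verbatim: both take the injection $\nu^{J-1}_i=\nu^J_{2i}$, bound the interpolation error by $\tfrac12 (L_1\nu^J,\nu^J)$, and compare against $\|\nu^J\|_{A_J}^2\ge(-a_1)(L_1\nu^J,\nu^J)$ to obtain $\kappa=a_0/(-2a_1)$. For $\beta=0$ you take a slightly cleaner route: rather than reusing the injection and passing through the auxiliary inequality $\sum_i(\nu_i^2-\nu_i\nu_{i+1})\le 2\|\nu^J\|^2$ as the paper does in (\ref{4.18}), you simply set $w^{J-1}=0$ and bound $\|\nu^J\|_{D_J}^2=a_0\|\nu^J\|^2$ directly via the spectral estimates of Lemmas \ref{lemma4.5}--\ref{lemma4.6}. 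Both routes rest on the same two ingredients---$a_0=\mathcal{O}(1/c^2)$ and $\lambda_{\min}(A_J)\ge 1/(27b^2)$---and both reach the same constant $\kappa=648\,b^2/c^2$; yours is marginally shorter, while the paper's keeps a single interpolant throughout. One slip worth correcting: for interior rows the off-diagonal absolute row sum of $A_J$ is $a_0$, not $a_0/2$ (entries appear on both sides of the diagonal), so Gershgorin only gives $\lambda_{\max}(D_J^{-1}A_J)\le 2$ rather than $3/2$; since you take $\eta_0=2$ anyway, this does not affect the conclusion.
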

\begin{proof}
Since $\lambda_{\max}(D_J^{-1}A_J)\leq \eta_0$ with $\eta_0=2$, it leads to $0 < \omega < 2/\eta_0=1$.
From Lemma \ref{lemma4.7}, we conclude that $K_J$ satisfies (\ref{4.8}) with  $\eta\leq2\omega(1-\omega)$. Let
$$\nu^J=(\nu_1,\nu_2,\ldots,\nu_{N})^{\rm T} \in \mathcal{M}_J, ~~ \nu^{J-1}=(\nu_2,\nu_4,\ldots,\nu_{N-1})^{\rm T} \in \mathcal{M}_{J-1},$$
and $\nu_0=\nu_{N+1}=0$ with $N=2^J-1$  in (\ref{3.1}).
From \cite{Chan:98,Chen:14,Pang:12} and (\ref{4.6}), we have
\begin{equation}\label{4.10}
\begin{split}
&||\nu^J-I_{J-1}^J\nu^{J-1}||_{D_J}^2\leq  a_0 \sum_{i=1}^{N}\left(\nu_{i}^2 -\nu_{i}\nu_{i+1}\right),
\end{split}
\end{equation}
\begin{equation}\label{4.11}
\begin{split}
&\sum_{i=1}^{N}\nu_{i}^2 \geq \sum_{i=1}^{N} \left|\nu_{i}\nu_{i+1}\right|,
\end{split}
\end{equation}
and
\begin{equation}\label{4.12}
\begin{split}
&||\nu^J||_{A_J}^2=(\nu^J,A_J\nu^J) \geq  (\nu^J,-a_1L_{1}\nu^J)=-2a_1\sum_{i=1}^{N}\left( \nu_i^2-\nu_i\nu_{i+1}   \right),
\end{split}
\end{equation}
where $a_0$ and $a_1$ are given in (\ref{2.15}). According to (\ref{4.10}) and (\ref{4.12}), there exists
\begin{equation}\label{4.13}
  ||\nu^J-I_{J-1}^J\nu^{J-1}||_{D_J}^2 \leq  \frac{a_0}{-2a_1} ||\nu^J||_{A_J}^2.
\end{equation}
Next we prove that (\ref{4.9}) holds.

Case 1: $\beta\geq 1.$ Using  (\ref{2.6}), there exists
\begin{equation*}
  R=\frac{\delta}{h}=ch^{\beta-1}\leq c  ~~{\rm as}~~h\leq 1.
\end{equation*}
When $ R \leq 1$, from (\ref{2.14}), it leads to $\kappa=\frac{a_0}{-2a_1}=1$; then from Lemma \ref{lemma4.8},
\begin{equation}\label{4.14}
||K_J T^J||_{A_J} \leq \sqrt{1-\eta }.
\end{equation}
When $ 1< R \leq c$, using  (\ref{2.15}) and (\ref{4.7}), we have
\begin{equation}\label{4.15}
\begin{split}
  a_1&=-\frac{1}{h^{2}}\cdot\frac{3}{R^3}=-\frac{3}{\delta^3}h=-\frac{3}{c^3}h^{1-3\beta},\\
  a_0&=-2\sum_{m=1}^{r+1}a_{i,m} \leq \frac{12}{\delta^2}=\frac{12}{c^2}h^{-2\beta}.
\end{split}
\end{equation}
It leads to
\begin{equation}\label{4.16}
  \kappa=\frac{a_0}{-2a_1}\leq 2ch^{\beta-1},
\end{equation}
i.e.,   $\kappa \leq 2c ~~{\rm as}~~h\leq 1$.  Thus we obtain
\begin{equation}\label{4.17}
  ||K_{J} T^J||_{A_J} \leq \sqrt{1-\eta/(2c) }<1.
\end{equation}
Combining (\ref{4.14}) and (\ref{4.17}), it yields
$$||K_{J} T^J||_{A_J} \leq \sqrt{1-\eta/c_0 }~~{\rm with }~~\beta\geq 1, \,c_0=\max(1,2c).$$

Case 2: $\beta=0$, i.e., $\delta=c$. Since $ \kappa \rightarrow \infty$ as  $\beta=0$ in the estimate  (\ref{4.16}),  we need to look for an estimate of the other form.  From  (\ref{4.15}), we obtain  $a_0\leq \frac{12}{c^2}.$
Using (\ref{4.11}), it yields
\begin{equation}\label{4.18}
  ||\nu^J||^2=\sum_{i=1}^{N}\nu_{i}^2=\frac{1}{2}\sum_{i=1}^{N}\nu_{i}^2+\frac{1}{2}\sum_{i=1}^{N}\nu_{i}^2
                                \geq \frac{1}{2}\sum_{i=1}^{N}\left(\nu_{i}^2 -\nu_{i}\nu_{i+1}\right).
\end{equation}
From  Lemma \ref{lemma4.5}, we have
\begin{equation}\label{4.19}
||\nu^J||_{A_J}^2=(A_J\nu^J,\nu^J) \geq \lambda_{\min}(A_J)||\nu^J||^2 \geq \frac{1}{27b^2}||\nu^J||^2.
\end{equation}
According to (\ref{4.10}), (\ref{4.18}), (\ref{4.19})  and  (\ref{4.15}), we get
\begin{equation*}
\begin{split}
 ||\nu^J-I_{J-1}^J\nu^{J-1}||_{D_J}^2
&\leq  a_0 \sum_{i=1}^{N}\left(\nu_{i}^2 -\nu_{i}\nu_{i+1}\right)\leq 2 a_0 ||\nu^J||^2
\leq 54b^2a_0 ||\nu^J||_{A_J}^2\leq \kappa ||\nu^J||_{A_J}^2
\end{split}
\end{equation*}
with
$\kappa=54b^2a_0\leq \frac{ 648b^2}{c^2}.$
Hence  $$||K_{J} T^J||_{A_J} \leq \sqrt{1-\eta c^2/(648 b^2)}~~{\rm with }~~\beta= 0.$$
The proof is completed.
\end{proof}

In the works \cite{Chan:98,Chen:14,Pang:12}, the convergence factor of the two-grid method is uniformly bounded below one independent of $h$ by estimating $\kappa=\frac{a_0}{2|a_1|}<\infty$, $a_1 \neq 0$.
Since $ \kappa=\frac{a_0}{2|a_1|} \rightarrow \infty$ as  $\beta \in [0,1)$ in the estimate (\ref{4.16}), next we need to use a different idea to prove the case: $\beta \geq 0$.
\begin{lemma}\label{lemma4.10}
Let $A=\sum\limits_{j=1}^nL_{j}$ and $B=nL_{1}$ with $n\geq 1$, where  $L_{j}$ are defined by  (\ref{4.4}).  Then $2A-B$ is a  positive definite matrix.
\end{lemma}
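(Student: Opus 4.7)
I would prove this by treating $2A-B$ as a symmetric banded Toeplitz matrix and invoking Lemma~\ref{lemma4.3} (Grenander-Szeg\"o). Each $L_j$ in (\ref{4.4}) is Toeplitz with diagonal $2$ and $-1$ on the $\pm j$-th off-diagonals, so by Definition~\ref{definition4.2} its generating function is $f_j(\theta)=2-2\cos(j\theta)=4\sin^2(j\theta/2)$. By linearity $2A-B=2\sum_{j=1}^n L_j-nL_1$ is symmetric Toeplitz with generating function
\begin{equation*}
\phi(\theta)=8\sum_{j=1}^n\sin^2(j\theta/2)-4n\sin^2(\theta/2).
\end{equation*}
Because $\phi$ is a nonconstant trigonometric polynomial (note $\phi(0)=0$ but $\phi(\pi/2)>0$), once $\phi(\theta)\ge 0$ on $[-\pi,\pi]$ is established, Lemma~\ref{lemma4.3} forces every eigenvalue of $2A-B$ to satisfy $\lambda>\phi_{\min}\ge 0$, yielding positive definiteness.

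The analytic heart of the proof is therefore the pointwise inequality $\phi\ge 0$. Setting $\psi=\theta/2$ this becomes
\begin{equation*}
\sum_{k=1}^n\sin^2(k\psi)\ge\tfrac{n}{2}\sin^2\psi,\qquad\psi\in\mathbb{R}.
\end{equation*}
I would reduce this via the double-angle identity $2\sin^2(k\psi)=1-\cos(2k\psi)$ together with the telescoping identity $\sin\psi\sum_{k=1}^n\cos(2k\psi)=\sin(n\psi)\cos((n+1)\psi)$ (obtained by summing $2\sin\psi\cos(2k\psi)=\sin((2k+1)\psi)-\sin((2k-1)\psi)$). After rearrangement the claim takes the equivalent form $(1+2n\cos^2\psi)\sin\psi\ge\sin((2n+1)\psi)$ on $[0,\pi]$; dividing by $\sin\psi$ and substituting $x=\cos\psi$, and using $\sin((2n+1)\psi)/\sin\psi=U_{2n}(x)$, it becomes the Chebyshev polynomial estimate $U_{2n}(x)\le 1+2nx^2$ on $[-1,1]$.

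The main obstacle is this final polynomial inequality, whose nontriviality stems from the rapid oscillation of $U_{2n}$. My strategy is to exploit that both sides agree at $x=\pm1$ to factor $1+2nx^2-U_{2n}(x)=(1-x^2)R_n(x)$, reducing the claim to $R_n\ge 0$ on $[-1,1]$; translating the Chebyshev three-term recurrence $U_{2n+2}=(4x^2-2)U_{2n}-U_{2n-2}$ yields the inhomogeneous recurrence $R_{n+1}=(4x^2-2)R_n-R_{n-1}+8nx^2+4$ with $R_0\equiv 0$, $R_1\equiv 2$, whose solution can be written explicitly by combining the particular solution $(2nx^2+1)/(1-x^2)$ with the Chebyshev homogeneous solutions in the variable $2x^2-1$. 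The delicate points in verifying nonnegativity of the resulting $R_n$ are the boundary $x=\pm 1$, where the pole of the particular solution must cancel against the homogeneous tails to produce the finite nonnegative value $R_n(\pm 1)=n(2n-1)(n+2)/3$, and $x=0$, where $R_n(0)=1-(-1)^n$ actually vanishes for even $n$ so the bound is saturated.
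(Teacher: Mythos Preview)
Your setup is exactly the paper's: both compute the generating function of the Toeplitz matrix $2A-B$ and invoke Lemma~\ref{lemma4.3}, so the whole problem is the pointwise inequality. Your rewriting of that inequality as $(1+2n\cos^2\psi)\sin\psi\ge\sin((2n+1)\psi)$ and then as the Chebyshev estimate $U_{2n}(x)\le 1+2nx^2$ on $[-1,1]$ is correct and is precisely equivalent to the paper's inequality $g(x)\ge 0$ in (\ref{4.20})--(\ref{4.22}).

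The gap is in the last paragraph. Carrying out your recipe gives
\[
(1-x^2)R_n(x)=2nx^2+1-U_n(2x^2-1)-U_{n-1}(2x^2-1),
\]
and with $2x^2-1=\cos\theta$ the right-hand side becomes $2n\cos^2(\theta/2)+1-\sin\!\big((n+\tfrac12)\theta\big)/\sin(\theta/2)$, which is literally your starting inequality with $\psi=\theta/2$. So the ``explicit solution via particular part $(2nx^2+1)/(1-x^2)$ plus Chebyshev homogeneous solutions in $2x^2-1$'' is circular: it reproduces the inequality rather than proving it. Checking the special points $x=\pm1$ and $x=0$ (incidentally, $R_n(\pm1)=2n(2n-1)(n+2)/3$, twice what you wrote; already $R_1\equiv 2$) does not control the interior, and the inhomogeneous recurrence $R_{n+1}=(4x^2-2)R_n-R_{n-1}+8nx^2+4$ does not propagate nonnegativity by induction because the coefficient $4x^2-2$ is negative near $x=0$. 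Thus the crucial step---why $R_n\ge 0$ on all of $(-1,1)$---is missing.

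The paper closes this step differently and elementarily: it works directly with the Dirichlet-kernel form $\varphi_n(x)=\big(\sin\frac{2n+1}{2}x-\sin\frac{x}{2}\big)/(2n\sin\frac{x}{2})$ and splits $[0,\pi]$ into three ranges of $\frac{2n+1}{2}x$ (at most $\pi$; between $\pi$ and $2\pi+\frac{x}{2}$; beyond $2\pi+\frac{x}{2}$). In the first range a monotonicity argument in $n$ gives $\varphi_n\le\varphi_1=\cos x<\cos^2(x/2)$; in the second $\varphi_n\le 0$ trivially; in the third a crude bound $\varphi_n\le(1-\sin\frac{x}{2})/(2n\sin\frac{x}{2})$ together with $2n\sin\frac{x}{2}\ge 4$ suffices. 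Either adopt this case analysis, or supply an independent argument for $U_{2n}(x)\le 1+2nx^2$ on $[-1,1]$ that does not unwind back to the same trigonometric inequality.
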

\begin{proof}
The generating functions of $A$ and $B$ are
\begin{equation*}
  f_{A}(x)=2n-2\sum_{k=1}^{n}\cos(kx)~~\mbox{and}~~ f_{B}(x)=2n(1-\cos x),
\end{equation*}
respectively.
Since $f_{A}(x)$ and $f_B(x)$ are the even function and  $2\pi$-periodic continuous real-valued functions defined on $[-\pi,\pi]$, we just need to consider on $[0,\pi]$. Moreover
\begin{equation}\label{4.20}
  2f_{A}(x)-f_B(x)=4ng(x)
\end{equation}
with
\begin{equation}\label{4.21}
g(x)= \cos^2\frac{x}{2}-\frac{1}{n}\sum\limits_{k=1}^{n}\cos(kx),~~~~ x\in[0,\pi].
\end{equation}

Next we  prove $g(x)\geq 0.$ If $x=0$, it yields  $g(x)=0$. Denote
\begin{equation}\label{4.22}
\varphi_n(x):=\frac{1}{n}\sum\limits_{k=1}^{n}\cos(kx)=\frac{\sin\frac{2n+1}{2}x-\sin\frac{x}{2}}{2n\sin\frac{x}{2}},~~~~ x\in(0,\pi].
\end{equation}

Case 1: $0<\frac{2n+1}{2}x \leq \pi$. We can rewrite (\ref{4.22}) as
\begin{equation*}
\varphi_n(x)=\frac{x}{2\sin\frac{x}{2}}    \phi (y)  ~~~~{\rm with} ~~~~ \phi (y)=\frac{\sin y-\sin\frac{x}{2}}{y-\frac{x}{2}},~~
y=\frac{2n+1}{2}x,~~~x \in \left(\frac{x}{2},\pi\right].
\end{equation*}
It is easy to prove that $\phi (y)$ decreases with respect to $y$, which implies
\begin{equation*}
\varphi_n(x) \leq \varphi_{n-1}(x)\leq \cdots \leq \varphi_1(x)=\frac{\sin\frac{3}{2}x-\sin\frac{x}{2}}{2\sin\frac{x}{2}}=\cos x< \cos^2\frac{x}{2},
\end{equation*}
i.e., $g(x)>0$.

Case 2: $\pi\leq \frac{2n+1}{2}x \leq 2\pi+\frac{x}{2}$. Since $\varphi_n(x)\leq 0 <\cos^2\frac{x}{2}$, it yields  $g(x)>0$.

Case 3: $ \frac{2n+1}{2}x \geq 2\pi+\frac{x}{2}$. Using (\ref{4.22}), there exists
\begin{equation*}
\varphi_n(x)\leq \frac{1-\sin\frac{x}{2}}{2n\sin\frac{x}{2}}=\frac{\cos^2\frac{x}{2}}{2n\sin\frac{x}{2}\left(1+\sin\frac{x}{2}\right)}<\cos^2\frac{x}{2},
\end{equation*}
since
$$2n\sin\frac{x}{2}\left(1+\sin\frac{x}{2}\right)\geq \frac{4\pi}{x}\sin\frac{x}{2}\geq \frac{4\pi}{x}\cdot\frac{2}{\pi}\cdot\frac{x}{2}=4.$$
According to the above equations and Lemma \ref{lemma4.3},
the desired result is obtained.
\end{proof}

\begin{theorem}\label{theorema4.11}
Let $A_J=A^h_\delta$ be defined by (\ref{2.10}) and (\ref{2.15}) on a finite bar $\Omega \in(0,b)$, where  $\delta=ch^\beta$,  $\beta\geq 0$, $h\rightarrow 0$ and $c>0$.
Then $K_{J}$ satisfies (\ref{4.8}) and the convergence factor of the TGM satisfies
\begin{equation*}
||K_{J} T^J||_{A_J} \leq \sqrt{1-\eta/6 }<1,~~~~\beta\geq 0,
\end{equation*}
where $\eta\leq2\omega(1-\omega)$ with $0 < \omega <1$.
\end{theorem}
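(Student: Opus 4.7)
The plan is to mimic the outer structure of Theorem 4.9 but replace the wasteful comparison $A_J \succeq -a_1 L_1$ (which sends $\kappa = a_0/(-2a_1)$ to infinity as soon as $\beta < 1$) by a tighter Loewner lower bound on $A_J$ that uses every off-diagonal block in the decomposition (\ref{4.6}). This is exactly what Lemma \ref{lemma4.10} was built to provide.

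The smoothing hypothesis (\ref{4.8}) is inherited verbatim from Theorem \ref{theorema4.9} since $\lambda_{\max}(D_J^{-1}A_J)\le 2$ and the choice of $\omega$ is unchanged, so the whole task is to verify the approximation property (\ref{4.9}) with $\kappa = 6$, uniformly in $h$ and $\delta$. I would take the same coarse trial vector $\nu^{J-1}$ as in Theorem \ref{theorema4.9}, so (\ref{4.10})--(\ref{4.11}) still apply and give
\begin{equation*}
\|\nu^J - I_{J-1}^J\nu^{J-1}\|_{D_J}^2 \le a_0 \sum_{i=1}^N\bigl(\nu_i^2 - \nu_i\nu_{i+1}\bigr) = \tfrac{a_0}{2}\,(\nu^J)^T L_1\,\nu^J,
\end{equation*}
where I used the identity $\sum_i(\nu_i^2 - \nu_i\nu_{i+1}) = \tfrac{1}{2}(\nu^J)^T L_1 \nu^J$ under $\nu_0 = \nu_{N+1} = 0$.

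Next I would bound the energy from below. Writing $\alpha := 3/(h^2 R^3)$, formula (\ref{2.15}) gives $|a_j| = \alpha$ for every $j = 1,\dots,r-1$ and $|a_r|, |a_{r+1}| \ge 0$, so by (\ref{4.6}),
\begin{equation*}
A_J = \sum_{j=1}^{r+1} |a_j|\,L_j \succeq \alpha \sum_{j=1}^{r-1} L_j \succeq \frac{(r-1)\alpha}{2}\, L_1,
\end{equation*}
the last step being Lemma \ref{lemma4.10} with $n = r-1$. To close the estimate I would prove the complementary upper bound $a_0 = 2\sum_{j=1}^{r+1}|a_j| \le 2(r+1)\alpha$, which reduces to the two pointwise inequalities $|a_r|\le\alpha$ and $|a_{r+1}|\le\alpha$. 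Combining the displays then yields
\begin{equation*}
\|\nu^J - I_{J-1}^J\nu^{J-1}\|_{D_J}^2 \le \frac{(r+1)\alpha}{(r-1)\alpha/2}\,\|\nu^J\|_{A_J}^2 = \frac{2(r+1)}{r-1}\,\|\nu^J\|_{A_J}^2 \le 6\,\|\nu^J\|_{A_J}^2
\end{equation*}
whenever $r \ge 2$, because $2(r+1)/(r-1)$ is decreasing in $r$ and equals $6$ at $r=2$. Applying Lemma \ref{lemma4.8} with this $\kappa = 6$ produces the claimed bound $\|K_JT^J\|_{A_J} \le \sqrt{1-\eta/6}$.

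The main technical step, and the step I expect to be the delicate one, is verifying the pointwise bounds $|a_r|, |a_{r+1}| \le \alpha$ straight from the explicit formulas in (\ref{2.15}). For $|a_{r+1}|$ I would factor $2R^2 - rR - r^2 = (2R+r)(R-r)$, rewrite $|a_{r+1}| = (R-r)^2(2R+r)/(2h^2R^3(r+1))$, and apply the crude bounds $R - r < 1$ and $2R + r < 3(r+1)$ to conclude $|a_{r+1}| < \alpha/2$. For $|a_r|$ I would parametrize $R = r + s$ with $s\in[0,1)$; the inequality $|a_r|\le\alpha$ reduces to the polynomial statement $6rs + 3s^2 - 3rs^2 - 2s^3 \le 3r+1$, whose derivative in $s$ factors as $6(1-s)(r+s) \ge 0$, so it suffices to check the endpoint $s = 1$, where it holds with equality and matches continuously with the next value of $r$. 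Finally the boundary cases $r \in \{0,1\}$ where Lemma \ref{lemma4.10} is trivial would be dispatched by direct computation: $r = 0$ gives the standard Laplacian with $\kappa = 1$, while for $r = 1$ one has $A_J = |a_1|L_1 + |a_2|L_2 \succeq |a_1|L_1$ and the explicit formulas on $R\in(1,2]$ give $|a_2|/|a_1| \le 5/4$, hence $\kappa \le 9/4 < 6$. Once the monotonicity argument for $|a_r|$ is in place, everything else is a direct assembly of Lemma \ref{lemma4.10}, (\ref{4.10}) and Lemma \ref{lemma4.8}.
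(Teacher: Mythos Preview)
Your proof is correct and follows essentially the same route as the paper: both use Lemma~\ref{lemma4.10} to convert $A_J \succeq -a_1\sum_{j=1}^{r-1}L_j$ into $A_J \succeq -\tfrac{a_1(r-1)}{2}L_1$, combine this with the bound $a_0 \le 2(r+1)|a_1|$ and the identity $\|\nu^J-I_{J-1}^J\nu^{J-1}\|_{D_J}^2 \le \tfrac{a_0}{2}(L_1\nu^J,\nu^J)$, and invoke Lemma~\ref{lemma4.8} with $\kappa = 2(r+1)/(r-1)\le 6$. Your write-up actually supplies two pieces the paper leaves implicit: the explicit verification of $|a_r|,|a_{r+1}|\le \alpha$ (the paper simply asserts this in (\ref{4.7})), and a separate treatment of $r=1$ with $R\in(1,2)$, where the matrix is genuinely pentadiagonal and the paper's reduction to $A_J=-a_1L_1$ does not literally apply.
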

\begin{proof}
By the proof of  Theorem \ref{theorema4.9},
we know that $K_J$ satisfies (\ref{4.8}) and
\begin{equation}\label{4.23}
\begin{split}
&||\nu^J-I_{J-1}^J\nu^{J-1}||^2\leq   \sum_{i=1}^{N}\left(\nu_{i}^2 -\nu_{i}\nu_{i+1}\right)=\frac{1}{2}\left(L_1\nu^J,\nu^J  \right).
\end{split}
\end{equation}

Case 1: $r\leq 1$.  From (\ref{2.14}) and (\ref{4.6}), we obtain
$$A_J=A_\delta^h=-a_1L_1,$$ which means that
\begin{equation}\label{4.24}
||\nu^J||_{A_J}^2=(A_J\nu^J,\nu^J) =(-a_1L_1\nu^J,\nu^J).
\end{equation}
 According to (\ref{4.23}) and (\ref{4.24}), there exists
\begin{equation*}
  ||\nu^J-I_{J-1}^J\nu^{J-1}||_{D_J}^2 \le \frac{a_0}{2}\left(L_1\nu^J,\nu^J  \right)=\frac{a_0}{-2a_1}||\nu^J||_{A_J}^2=||\nu^J||_{A_J}^2.
\end{equation*}
 Thus from Lemma \ref{lemma4.8}, we obtain
\begin{equation*}
  ||K_{J} T^J||_{A_J} \leq \sqrt{1-\eta }~~~{\rm with}~~~r\leq 1.
\end{equation*}

Case 2: $r\geq  2$. According to (\ref{4.6}), (\ref{2.15}), Lemma \ref{lemma4.10} and (\ref{4.23}), we obtain
\begin{equation*}
\begin{split}
||\nu^J||_{A_J}^2
&=\left( A_J\nu^J,\nu^J \right) \geq  -a_1\left( \sum_{j=1}^{r-1}L_j\nu^J,\nu^J \right)\geq - \frac{a_1(r-1)}{2}\left( L_1\nu^J,\nu^J \right)\\
&\geq - \frac{a_1(r+1)}{6}\left( L_1\nu^J,\nu^J \right)\geq  \frac{a_0}{12}\left( L_1\nu^J,\nu^J \right)
\geq  \frac{a_0}{6}||\nu^J-I_{J-1}^J\nu^{J-1}||^2\\
&=\frac{1}{6}||\nu^J-I_{J-1}^J\nu^{J-1}||_{D_J}^2.
\end{split}
\end{equation*}
Thus from Lemma \ref{lemma4.8}, we have
\begin{equation*}
  ||K_{J} T^J||_{A_J} \leq \sqrt{1-\eta/6 }~~~{\rm with}~~~r\geq 2.
\end{equation*}
The proof is completed.
\end{proof}

\section{Convergence of the full MGM and V-cycle MGM with $\bf{\delta=ch}$}
We extend the convergence results of TGM given in the above section to the full MGM and V-cycle MGM in case that $\delta=ch$ with $c$ being an appropriate natural number. First, we introduce some lemmas. We will use the notion of M-matrix, which is a positive definite matrix with positive entries on the diagonal and nonpositive off-diagonal entries. And another notion is called weakly diagonal dominant \cite[p.\,3]{Briggs:00}, if the diagonal element of a matrix is at least as large as the sum of the off-diagonal elements in the same row or column.

\begin{lemma}[\cite{Chen:16}] \label{lemma5.1}
Let $A^{(1)}=\{a_{i,j}^{(1)}\}_{i,j=1}^{\infty}$ with $a_{i,j}^{(1)}=a_{|i-j|}^{(1)}$ be a  symmetric  Toeplitz  matrix
and  $A^{(k)}=L_h^{H}A^{(k-1)}L_{H}^{h}$ with $L_h^{H}=4I_k^{k-1}$ and $L_H^{h}=(L_h^{H})^T$. Then
$A^{(k)}$ can be computed by (\ref{5.1}). Here
\begin{equation}\label{5.1}
\begin{split}
a_0^{(k)}
=&(4C_k+2^{k-1})a_0^{(1)}+\sum_{m=1}^{2\cdot2^{k-1}-1}{_0}C_m^ka_m^{(1)};\\
a_1^{(k)}
=&C_ka_0^{(1)}+\sum_{m=1}^{3\cdot2^{k-1}-1}{_1}C_m^k a_m^{(1)};\\
a_j^{(k)}
=&\sum_{m=(j-2)2^{k-1}}^{(j+2)2^{k-1}-1} {_j}C_m^k a_m^{(1)} \quad \forall j\geq 2 \quad \forall k\geq 2
    \end{split}
\end{equation}
with $C_k=2^{k-2}\cdot\frac{2^{2k-2}-1}{3}$. And
\begin{equation*}
{_0}C_m^k=\left\{ \begin{split}
&8C_k-(m^2-1)(2^k-m) ~~\quad~{\rm for}~~m=1: 2^{k-1};\\
&\frac{1}{3}(2^{k}-m-1)(2^{k}-m)(2^{k}-m+1)
~~{\rm for}~~m=2^{k-1}:2\cdot2^{k-1}-1;
 \end{split}
 \right.
\end{equation*}
${_1}C_m^k=$
\begin{equation*}
\left\{ \begin{split}
&2C_k+m^2\cdot2^{k-1}-\frac{2}{3}(m-1)m(m+1) ~~\quad~{\rm for}~~m=1: 2^{k-1};\\
&2C_k+(2^k-m)^2\cdot2^{k-1}-\frac{2}{3}(2^k-m-1)(2^k-m)(2^k-m+1)\\
& -\frac{1}{6}(m-2^{k-1}-1)(m-2^{k-1})(m-2^{k-1}+1)
~~\quad~{\rm for}~~~~m=2^{k-1}: 2\cdot2^{k-1};\\
&\frac{1}{6}(3\cdot2^{k-1}\!-\!m\!-\!1)(3\cdot2^{k-1}\!-\!m)(3\cdot2^{k-1}\!-\!m+1)
~~{\rm for}~~m=2\cdot2^{k-1}:3\cdot2^{k-1}-1;
 \end{split}
 \right.
\end{equation*}
and for $j\geq 2$,
\begin{equation*}
{_j}C_m^k=
\left\{ \begin{split}
&\varphi_1 ~~\quad~{\rm for}~~m=(j-2)2^{k-1}:(j-1)2^{k-1};\\
&\varphi_2
~~\quad~{\rm for}~~ m=(j-1)2^{k-1}: j2^{k-1};\\
&\varphi_3
~~\quad~{\rm for}~~ m=j2^{k-1}: (j+1)2^{k-1};
\\
&\varphi_4
~~\quad~{\rm for}~~ m=(j+1)2^{k-1}:(j+2)2^{k-1}-1,
 \end{split}
 \right.
\end{equation*}
where
\begin{equation*}
\begin{split}
\varphi_1=\frac{1}{6}(m-(j-2)2^{k-1}-1)(m-(j-2)2^{k-1})(m-(j-2)2^{k-1}+1);
 \end{split}
\end{equation*}
\begin{equation*}
\begin{split}
\varphi_2=&2C_k+(m-(j-1)2^{k-1})^2\cdot2^{k-1}\\
& -\frac{1}{6}(j2^{k-1}-m-1)(j2^{k-1}-m)(j2^{k-1}-m+1)\\
&-\frac{2}{3}(m-(j-1)2^{k-1}-1)(m-(j-1)2^{k-1})(m-(j-1)2^{k-1}+1);\\
 \end{split}
\end{equation*}
\begin{equation*}
\begin{split}
\varphi_3=&2C_k+((j+1)2^{k-1}-m)^2\cdot2^{k-1} \\
&-\frac{1}{6}(m-j2^{k-1}-1)(m-j2^{k-1})(m-j2^{k-1}+1)\\
&-\frac{2}{3}((j+1)2^{k-1}-m-1)((j+1)2^{k-1}-m)((j+1)2^{k-1}-m+1);\\
 \end{split}
\end{equation*}
\begin{equation*}
\begin{split}
\varphi_4=&\frac{1}{6}((j+2)2^{k-1}-m-1)((j+2)2^{k-1}-m)((j+2)2^{k-1}-m+1).
 \end{split}
\end{equation*}
\end{lemma}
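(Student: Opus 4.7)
The plan is to prove (\ref{5.1}) by induction on $k$, with $k=1$ as the trivial base case. The inductive step rests on two ingredients: a clean one-step restriction formula relating $a^{(k)}_j$ to entries of $A^{(k-1)}$, obtained directly from the definition $A^{(k)}=L_h^H A^{(k-1)} L_H^h$; and a careful verification that substituting the hypothesis (\ref{5.1}) at level $k-1$ into this recursion reproduces (\ref{5.1}) at level $k$.

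For the one-step recursion, I would use that $L_h^H$ has stencil $(1,2,1)$ in each row $i$ concentrated at columns $2i-1,2i,2i+1$. A direct block multiplication then gives
$$
a^{(k)}_{i,j}=\sum_{p,q\in\{-1,0,1\}}w_p w_q\, a^{(k-1)}_{2i+p,\,2j+q},\qquad w_0=2,\ w_{\pm1}=1,
$$
so $A^{(k)}$ is again symmetric Toeplitz, and regrouping by the value of $p-q$ yields
$$
a^{(k)}_{j}=a^{(k-1)}_{|2j-2|}+4\,a^{(k-1)}_{|2j-1|}+6\,a^{(k-1)}_{|2j|}+4\,a^{(k-1)}_{|2j+1|}+a^{(k-1)}_{|2j+2|}.
$$
For $j\ge 2$ the absolute values are inert; for $j=0,1$ the reflection folds pairs of terms together and is precisely the source of the exceptional formulas for $a^{(k)}_0$ and $a^{(k)}_1$ in (\ref{5.1}). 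Substituting the inductive hypothesis into each of the five summands with $i=2j+l$ for $l=-2,\dots,2$, the level-$(k-1)$ supports $m\in[(i-2)2^{k-2},(i+2)2^{k-2}-1]$ overlap to tile exactly $[(j-2)2^{k-1},(j+2)2^{k-1}-1]$, matching the support claimed at level $k$.

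The four piecewise-cubic branches $\varphi_1,\ldots,\varphi_4$ at level $k$ should then emerge from summing the shifted level-$(k-1)$ $\varphi$-expressions weighted by $(1,4,6,4,1)$, with the arithmetic recursion $C_k=8C_{k-1}+2^{k-2}$ (easily derived from $C_k=2^{k-2}(2^{2k-2}-1)/3$) responsible for matching the constant-in-$m$ contributions. The main obstacle is the polynomial bookkeeping on the three interior break points $m=(j-1)2^{k-1},\ j\cdot 2^{k-1},\ (j+1)2^{k-1}$ where adjacent pieces meet, together with the $j=0,1$ cases where the absolute values fold contributions from two different shifts into the same $a^{(1)}_m$. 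Verifying that the collected sums agree on each sub-interval reduces to checking a handful of cubic-in-$m$ identities by comparing coefficients of $m^0,m^1,m^2,m^3$, and the continuity of ${_j}C^k_m$ at the break points yields independent consistency checks along the way. A more elegant but essentially equivalent alternative is the symbol viewpoint: each iteration multiplies the Toeplitz symbol by a factor related to $(1+\cos x)^2$ together with frequency folding, so the cumulative kernel after $k-1$ iterations is a discrete cubic-B-spline-type refinement mask, but extracting the four explicit cubic branches reduces to the same combinatorial identity.
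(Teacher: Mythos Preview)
The paper does not actually supply a proof of this lemma: it is quoted verbatim from \cite{Chen:16}, and the text moves directly to Lemma~\ref{lemmma5.2}. So there is no ``paper's own proof'' to compare against here.

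That said, your strategy is the natural one and is correct. The one-step recursion
\[
a^{(k)}_{j}=a^{(k-1)}_{|2j-2|}+4\,a^{(k-1)}_{|2j-1|}+6\,a^{(k-1)}_{|2j|}+4\,a^{(k-1)}_{|2j+1|}+a^{(k-1)}_{|2j+2|}
\]
is exactly what the Galerkin product with the $(1,2,1)$ stencil produces, and the identity $C_k=8C_{k-1}+2^{k-2}$ is indeed equivalent to the closed form $C_k=2^{k-2}(2^{2k-2}-1)/3$. Your observation that the formulas in (\ref{5.1}) also hold at $k=1$ (where $C_1=0$ and all the ${_j}C_m^1$ collapse to $\delta_{j,m}$) gives a clean induction anchor; alternatively one can take $k=2$ as the base and verify it directly from the recursion, which is the form used implicitly in the proof of Lemma~\ref{lemmma5.2}. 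The only part that requires care is, as you note, the piecewise bookkeeping at the breakpoints and the folding from the absolute values when $j\in\{0,1\}$; this is tedious but mechanical, and the B-spline/symbol interpretation you mention is a good sanity check on the shape of the coefficients even if it does not shorten the algebra.
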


\begin{lemma}\label{lemmma5.2}
Let $A^{(1)}=\{a_{i,j}^{(1)}\}_{i,j=1}^{\infty}$ with $a_{i,j}^{(1)}=a_{|i-j|}^{(1)}$ be a weakly diagonally dominant symmetric  Toeplitz M-matrix
and $D_{(k)}$ be the diagonal of the matrix $A^{(k)}$, where $A^{(k)}=I_k^{k-1}A^{(k-1)}I_{k-1}^{k}$. Then
$$1 \leq \lambda_{\max}\left(D_{(k)}^{-1}A^{(k)}\right) < 3.$$
In particular,
$$1 \leq \lambda_{\max}\left(D_{(k)}^{-1}A^{(k)}\right) \leq  2~~if ~~a_1^{(k)}\leq 0.$$
\end{lemma}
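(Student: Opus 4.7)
The plan is to exploit the fact that $A^{(k)}$ is symmetric Toeplitz, so its diagonal equals $D_{(k)} = a_0^{(k)} I$, whence $\lambda_{\max}(D_{(k)}^{-1}A^{(k)}) = \lambda_{\max}(A^{(k)})/a_0^{(k)}$. The lower bound $\geq 1$ is immediate from the Rayleigh quotient characterization (\ref{4.3}): for any standard basis vector $e_i$ one has $e_i^T A^{(k)} e_i/(e_i^T D_{(k)} e_i) = a_0^{(k)}/a_0^{(k)} = 1$, so $\lambda_{\max}(D_{(k)}^{-1}A^{(k)}) \geq 1$.

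For the upper bound I would invoke Grenander--Szeg\"o (Lemma \ref{lemma4.3}): $\lambda_{\max}(A^{(k)}) \leq f^{(k)}_{\max}$, where $f^{(k)}(x) = a_0^{(k)} + 2\sum_{j\geq 1} a_j^{(k)}\cos(jx)$ is the generating function of $A^{(k)}$, and further $f^{(k)}_{\max} \leq a_0^{(k)} + 2\sum_{j\geq 1}|a_j^{(k)}|$ by the triangle inequality. A key structural ingredient is row-sum preservation of the Galerkin coarsening: from (\ref{3.2}) and (\ref{3.3}) one checks that $I_k^{k-1}\mathbf{1} = \mathbf{1}$ and $I_{k-1}^k\mathbf{1} = \mathbf{1}$ in the bulk, so a one-line induction on $k$ gives $A^{(k)}\mathbf{1} = s_1\mathbf{1}$ with $s_1 := a_0^{(1)} + 2\sum_{j\geq 1} a_j^{(1)} \geq 0$ by the weak diagonal dominance of $A^{(1)}$. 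Equivalently $f^{(k)}(0) = s_1 \geq 0$, i.e., $-\sum_{j\geq 1} a_j^{(k)} \leq a_0^{(k)}/2$.

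Next I would perform a sign analysis of the $a_j^{(k)}$ using Lemma \ref{lemma5.1}. Direct inspection of the piecewise formulas shows that ${_j}C_m^k \geq 0$ for every $m$ in the support when $j \geq 2$, which combined with the M-matrix hypothesis $a_m^{(1)} \leq 0$ forces $a_j^{(k)} \leq 0$ for all $j \geq 2$; only $a_1^{(k)}$ can be positive, because its formula contains the lone positive term $C_k a_0^{(1)}$. In the case $a_1^{(k)} \leq 0$, every off-diagonal of $A^{(k)}$ is then nonpositive, so $\sum_{j\geq 1}|a_j^{(k)}| = -\sum_{j\geq 1} a_j^{(k)} \leq a_0^{(k)}/2$, yielding $f^{(k)}_{\max} \leq 2 a_0^{(k)}$ and the sharp bound $\lambda_{\max}(D_{(k)}^{-1}A^{(k)}) \leq 2$. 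In the general case, since only $a_1^{(k)}$ may be positive, $\sum_{j\geq 1}|a_j^{(k)}| = a_1^{(k)} - \sum_{j\geq 2} a_j^{(k)} \leq 2a_1^{(k)} + a_0^{(k)}/2$, and the claimed strict bound $<3$ reduces to showing $a_1^{(k)} < a_0^{(k)}/4$.

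The main obstacle is precisely this final strict inequality $a_1^{(k)} < a_0^{(k)}/4$. By Lemma \ref{lemma5.1} it rewrites as $2^{k-1} a_0^{(1)} + \sum_m ({_0}C_m^k - 4{_1}C_m^k)\, a_m^{(1)} > 0$: the first term is a positive buffer of the right order, while the correction term must be controlled by a careful termwise comparison of ${_0}C_m^k$ against $4{_1}C_m^k$ on each subinterval of their common support, combined with the weak diagonal dominance bound $a_0^{(1)} \geq 2\sum_{m\geq 1}|a_m^{(1)}|$. Strictness is guaranteed by the fact that $A^{(k)}$, being a Galerkin projection of a positive definite matrix, is itself positive definite with non-constant generating function, so the strict part of Lemma \ref{lemma4.3} applies to tighten $\lambda_{\max}(A^{(k)}) < f^{(k)}_{\max}$, and hence $\lambda_{\max}(D_{(k)}^{-1}A^{(k)}) < 3$.
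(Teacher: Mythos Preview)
Your approach is correct but genuinely different from the paper's. The paper does not use row-sum preservation at all; instead it attacks $2\sum_{j\ge 1}|a_j^{(k)}|$ head-on via the explicit recursion of Lemma~\ref{lemma5.1}, telescoping the sums $\sum_j {_j}C_m^k$ over the piecewise ranges and using the identity $\sum_{j} {_j}C_m^k = 6C_k+2^{k-1}$ together with the weak diagonal dominance of $A^{(1)}$. This produces the explicit constant $2\sum_{j\ge 1}|a_j^{(k)}|\le \frac{8C_k+2^{k-1}}{4C_k+2^{k-1}}\,a_0^{(k)}<2a_0^{(k)}$, from which the strict bound $<3$ follows by Gershgorin without appealing to the strict part of Grenander--Szeg\H{o}. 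Your route, by contrast, first exploits that the constant vector passes through $I_k^{k-1}$ and $I_{k-1}^k$ unchanged, so $f^{(k)}(0)=a_0^{(k)}+2\sum_{j\ge1}a_j^{(k)}\ge 0$ for every $k$; combined with the sign observation $a_j^{(k)}\le 0$ for $j\ge 2$ (which the paper also uses, citing \cite{Chen:16}), this dispatches the case $a_1^{(k)}\le 0$ in one line---noticeably cleaner than the paper's explicit computation for that case. For $a_1^{(k)}>0$ you correctly isolate the residual task as $4a_1^{(k)}\le a_0^{(k)}$; your sketch can indeed be completed: from Lemma~\ref{lemma5.1} one has $a_0^{(k)}-4a_1^{(k)}=2^{k-1}a_0^{(1)}+\sum_{m\ge1}({_0}C_m^k-4\,{_1}C_m^k)\,a_m^{(1)}$, and a direct check on each subinterval shows $4\,{_1}C_m^k\ge {_0}C_m^k$, so the M-matrix sign $a_m^{(1)}\le0$ already gives $a_0^{(k)}-4a_1^{(k)}\ge 2^{k-1}a_0^{(1)}>0$ (diagonal dominance is not even needed here). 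What the paper's approach buys is the sharper, $k$-dependent constant (tending to $2$), whereas your approach buys conceptual transparency: the $\le 2$ case becomes a structural fact about Galerkin coarsening rather than a combinatorial calculation.
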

\begin{proof}
We take $A^{(k)}=L_h^{H}A^{(k-1)}L_{H}^{h}$ with $L_h^{H}=4I_k^{k-1}$ and $L_H^{h}=(L_h^{H})^T$,
and denote $$A^{(k)}=\{a_{i,j}^{(k)}\}_{i,j=1}^{\infty} ~~~~ {\rm with}~~~~ a_{i,j}^{(k)}=a_{|i-j|}^{(k)},~~~~\forall k \geq 1.$$
By mathematical induction, we prove the estimates
\begin{equation}\label{5.2}
  2\sum_{j=1}^{n}\left|a_j^{(s)}\right| \leq a_0^{(s)}~~~~{\rm if}~~a_1^{(s)}\leq 0,~~~~s\geq 2, n\geq 1;
\end{equation}
and
\begin{equation}\label{5.3}
2\sum_{j=1}^{n}\left|a_j^{(s)}\right| \leq \frac{2C_s+6C_s+2^{s-1}}{4C_s+2^{s-1}}a_0^{(s)}<2a_0^{(s)}~~~~{\rm if}~~a_1^{(s)}\geq 0.
\end{equation}

For $s=2$ and $a_1^{(2)}\leq 0.$  From  (\ref{5.1}) and (2.13) of \cite{Chen:16}, we get   $a_j^{(2)}\leq 0$, $j\geq 2$ and
\begin{equation*}
\begin{split}
2\sum_{j=1}^{n}\left|a_j^{(2)}\right|
&=-2\sum_{j=1}^{n}a_j^{(2)}\\
&=-2a_0^{(1)}+8\sum_{j=1}^{2n-1}|2a_j^{(1)}|+8a_1^{(1)}+2a_2^{(1)}+2a_{2n}^{(1)}+8a_{2n+1}^{(1)}+14a_{2n+2}^{(1)}\\
&\leq 6a_0^{(1)}+8a_1^{(1)}+2a_2^{(1)}=a_0^{(2)},
\end{split}
\end{equation*}
where we use the property of $A^{(1)}$, being a weakly diagonally dominant M-matrix.

For $s=2$ and $a_1^{(2)}\geq 0.$  According to  (\ref{5.1}) and (2.13) of \cite{Chen:16},  there exists  $a_j^{(2)}\leq 0$, $j\geq 2$ and
\begin{equation*}
\begin{split}
2\sum_{j=1}^{n}\left|a_j^{(2)}\right|
=&2a_1^{(2)}-2\sum_{j=2}^{n}a_j^{(2)}\\
=&2a_0^{(1)}+8\sum_{j=1}^{2n-1}|2a_j^{(1)}|+24a_1^{(1)}+26a_2^{(1)}+16a_3^{(1)} +4a_4^{(1)} \\
& +2a_{2n}^{(1)}+8a_{2n+1}^{(1)}+14a_{2n+2}^{(1)}  \\
\leq & \frac{10}{6}\left(6a_0^{(1)}+8a_1^{(1)}+2a_2^{(1)}\right)=\frac{5}{3}a_0^{(2)}.
\end{split}
\end{equation*}
Then (\ref{5.2}) and (\ref{5.3}) hold for $s=2$.
Suppose that (\ref{5.2}) and (\ref{5.3})  hold for $s=2,3,\ldots k-1$.
Next we prove (\ref{5.3})  holds for $s=k$.

Taking $a_1^{(k)}\geq 0$ and using (\ref{5.1}) and    $a_j^{(k)}\leq 0,~~j\geq 2$, we have
\begin{equation*}
\begin{split}
&2\sum_{j=1}^{n}\left|a_j^{(k)}\right|=2a_1^{(k)}-2\sum_{j=2}^{n}a_j^{(k)}\\
 =&2C_ka_0^{(1)}+2\sum_{m=1}^{2^{k-1}-1}({_1}C_m^k-{_2}C_m^k) a_m^{(1)}
+2\sum_{m=2^{k-1}}^{2\cdot2^{k-1}-1}({_1}C_m^k-{_2}C_m^k-{_3}C_m^k) a_m^{(1)}\\
&+2\sum_{m=2\cdot2^{k-1}}^{3\cdot2^{k-1}-1}({_1}C_m^k-{_2}C_m^k-{_3}C_m^k-{_4}C_m^k) a_m^{(1)}\\
&-2\sum_{j=2}^{n-3}\sum_{m=(j+1)2^{k-1}}^{(j+2)2^{k-1}-1}({_j}C_m^k+{_{j+1}}C_m^k+{_{j+2}}C_m^k+{_{j+3}}C_m^k) a_m^{(1)} \\
&-2\sum_{m=(n-1)2^{k-1}}^{n2^{k-1}-1}({_{n-2}}C_m^k+{_{n-1}}C_m^k+{_{n}}C_m^k) a_m^{(1)}\\
&-2\sum_{m=n2^{k-1}}^{(n+1)2^{k-1}-1}({_{n-1}}C_m^k+{_{n}}C_m^k) a_m^{(1)}-2\sum_{m=(n+1)2^{k-1}}^{(n+2)2^{k-1}-1}{_{n}}C_m^k a_m^{(1)}\\
\leq & \frac{2C_k+6C_k+2^{k-1}}{4C_k+2^{k-1}}a_0^{(k)}<2a_0^{(k)},
\end{split}
\end{equation*}
where we use ${_j}C_m^k+{_{j+1}}C_m^k+{_{j+2}}C_m^k+{_{j+3}}C_m^k=6C_k+2^{k-1}$
 and ${_1}C_m^k-{_2}C_m^k+6C_k+2^{k-1} \geq {_{0}}C_m^k,~m=1: 2^{k-1}$
 and ${_1}C_m^k-{_2}C_m^k-{_3}C_m^k +6C_k+2^{k-1} \geq {_{0}}C_m^k,~~~m=2^{k-1}: 2\cdot2^{k-1}$
 and the property of $A^{(1)}$.
Similarly, we can prove (\ref{5.2}).
Then we obtain
\begin{equation*}
\begin{split}
1 \leq \lambda_{\max}\left(D_{(k)}^{-1}A^{(k)}\right) < 3,~~a_1^{(k)}\geq 0
~~{\rm { and}}~~
1 \leq \lambda_{\max}\left(D_{(k)}^{-1}A^{(k)}\right) \leq  2,~~a_1^{(k)}\leq 0.
  \end{split}
\end{equation*}
The proof is completed.
\end{proof}

\subsection{The operation count and storage requirement}\label{section:3.2}
We now discuss the computation count and the required storage for the MGM of nonlocal problem (\ref{2.10}).

From (\ref{2.11}), we know that the  matrix $A_h=A_{\delta}^h$ is
a symmetric banded Toeplitz matrix with bandwidth $r+1$ (obviously less than $N$).
Then, we only need to store the first column of $A_h$,
which have $N$ parameters, instead of the full matrix $A_h$ with $N^2$ entries.
From Lemma \ref{lemmma5.2}, we know that $\{A_k\}$ is the symmetric Toeplitz matrix  with the grid sizes $\{2^{J-k}h\}_{k=1}^{J-1}$,
i.e., $\mathcal{M}_{k}$ requires $2^{J-k}N$ storage.
Adding these terms together, we
\begin{equation*}
  \mbox{Storage}=\mathcal{O}(N) \cdot \left( 1+\frac{1}{2}+\frac{1}{2^2}+\ldots,+\frac{1}{2^{J-1}} \right)=\mathcal{O}(N).
\end{equation*}

As for operation counts, the matrix-vector product associated with the matrix $A_h$ is a discrete convolution.
While the cost of a direct product is $O(r N)$, the cost of using the FFT would lead to $O(N\log(N)$ \cite{Chan:07}.
Moreover, from (\ref{5.1}), we know that the bandwidth of $\{A_k\}$  is not bigger than  the bandwidth of  $A_h$.
Hence, with the change of $r$, we may adopt different strategies. Thus, the total
per V-cycle MGM operation count is
\begin{equation*}
\mathcal{O}\left(  \min\{ rN,N\log N\} \right)
\cdot \left( 1+\frac{1}{2}+\ldots,+\frac{1}{2^{J-1}} \right)
\!= \!\left\{
 \begin{array}{ll}\!\!
  \mathcal{O}(N), & r~\mbox{is bounded}, \\
\!\!\mathcal{O}(N \log (N),   \!\! & \mbox{in the worst case}.
  \end{array}\right.
\end{equation*}
\subsection{Convergence of the full MGM with $\bf{\delta=ch}$}
We further consider the convergence of the full MGM (recursive application of the TGM procedure).
More precisely we show that the constants $\eta$ in Lemma \ref{lemma4.7} and $\kappa$ in Lemma \ref{lemma4.8} do not depend on the levels; this level independence is crucial
for the convergence theory of the full MGM \cite{Chan:98,Serra:02}. In the following, we consider the simple algebraic systems, but
these algebraic arguments are mostly motivated from analytic considerations.
\begin{lemma}\label{lemma}\label{lemma5.3}
Let  $A^{(1)}=c_1L_1 + c_2L_2+c_3L_3$, $c_i>0$,  $i=1,2,3$ and $A^{(k)}=I_k^{k-1}A^{(k-1)}I_{k-1}^{k}$.  Then
\begin{equation*}
||K_{k} T^k||_{A_k} \leq \sqrt{1-\eta/\kappa }<1~~\forall ~1\leq k \leq J,
\end{equation*}
where $A_k=A^{(J-k+1)}$, $\kappa=\max \left\{ 1+\frac{c_2+c_3}{c_1},~3    \right \}$, and  $\eta\leq2\omega(1-\omega)$ with $0 < \omega <1$.
\end{lemma}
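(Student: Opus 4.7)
The plan is to apply Lemma~\ref{lemma4.8} at every level $k \in \{1,\dots,J\}$, which requires verifying the smoothing property~(\ref{4.8}) and the approximation property~(\ref{4.9}) with constants $\eta$ and $\kappa$ that do not depend on $k$. For the smoothing side I would first check that $A^{(1)} = c_1L_1 + c_2L_2 + c_3L_3$ is a weakly diagonally dominant symmetric Toeplitz M-matrix (the off-diagonals $-c_1,-c_2,-c_3$ are nonpositive, each $L_j$ has zero row sum, and $a_0^{(1)} = 2(c_1+c_2+c_3) > 0$). The structural step below will show that every coarse operator $A^{(k)}$ remains a weakly diagonally dominant M-matrix with $a_1^{(k)} \leq 0$, so Lemma~\ref{lemmma5.2} delivers $\lambda_{\max}(D_{(k)}^{-1}A^{(k)}) \leq 2$ and Lemma~\ref{lemma4.7} yields~(\ref{4.8}) with $\eta \leq 2\omega(1-\omega)$ for any $\omega \in (0,1)$, uniformly in $k$.

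For the approximation side, the interpolation estimate~(\ref{4.23}) of Theorem~\ref{theorema4.11} gives
\begin{equation*}
\|\nu^k - I_{k-1}^k \nu^{k-1}\|_{D_k}^2 \leq \frac{a_0^{(J-k+1)}}{2}\bigl(L_1\nu^k,\nu^k\bigr),
\end{equation*}
so it suffices to exhibit $\alpha_k > 0$ with $(A_k\nu,\nu) \geq \alpha_k(L_1\nu,\nu)$, which yields $\kappa_k = a_0^{(J-k+1)}/(2\alpha_k)$. At the finest level ($A_J = A^{(1)}$) this is immediate: $(A^{(1)}\nu,\nu) \geq c_1(L_1\nu,\nu)$ since $L_2,L_3$ are positive semidefinite, producing $\kappa^{(1)} = 1 + (c_2+c_3)/c_1$.

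To handle $m \geq 2$, I would carry out the Galerkin coarsening of the three generators explicitly. A direct stencil calculation using~(\ref{3.2}) and~(\ref{3.3}) gives $I_k^{k-1}L_1 I_{k-1}^k = \tfrac14 L_1$, $I_k^{k-1}L_2 I_{k-1}^k = \tfrac12 L_1 + \tfrac18 L_2$, and $I_k^{k-1}L_3 I_{k-1}^k = \tfrac14 L_1 + \tfrac12 L_2$; by linearity $A^{(2)} = c_1^{(2)}L_1 + c_2^{(2)}L_2$ with $c_1^{(2)} = (c_1+2c_2+c_3)/4 > 0$ and $c_2^{(2)} = (c_2+4c_3)/8 > 0$, and iterating shows $A^{(m)} = c_1^{(m)}L_1 + c_2^{(m)}L_2$ for every $m \geq 2$, with the recursion $c_1^{(m+1)} = c_1^{(m)}/4 + c_2^{(m)}/2$ and $c_2^{(m+1)} = c_2^{(m)}/8$. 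This simultaneously confirms the M-matrix property invoked in the smoothing step and, since $L_2$ is positive semidefinite, reduces $\kappa^{(m)}$ to the scalar ratio $1 + c_2^{(m)}/c_1^{(m)}$.

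It then remains to estimate this ratio uniformly. For $m = 2$, elementary algebra on $(c_2+4c_3)/(2(c_1+2c_2+c_3))$ gives an upper bound of $2$ (sharp in the limit $c_1,c_2 \downarrow 0$), hence $\kappa^{(2)} \leq 3$. For $m \geq 3$, the recursion yields $c_2^{(m+1)}/c_1^{(m+1)} = c_2^{(m)}/(2c_1^{(m)}+4c_2^{(m)}) \leq 1/4$, so $\kappa^{(m)} \leq 5/4$. Therefore $\sup_k \kappa_k = \max\{1+(c_2+c_3)/c_1,\,3\}$, matching the $\kappa$ in the statement; Lemma~\ref{lemma4.8} applied at each level finishes the proof. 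The main obstacle is the structural step: establishing that under Galerkin coarsening the operator stays in the two-parameter family $\{c_1 L_1 + c_2 L_2 : c_1,c_2 > 0\}$ from the second level onward. Once that is in hand, everything else collapses to an explicit scalar recursion, and the $3$ in $\kappa$ is revealed as the worst-case ratio, attained at $m = 2$.
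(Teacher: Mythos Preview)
Your proposal is correct and follows essentially the same route as the paper. Both arguments show that after one Galerkin step the operator lands in $\mathrm{span}\{L_1,L_2\}$ with positive coefficients, and then bound $\kappa^{(k)}=a_0^{(k)}/(-2a_1^{(k)})=1+\sigma_2^{(k)}/\sigma_1^{(k)}$ uniformly by $3$ for $k\ge 2$ (with the finest level contributing $1+(c_2+c_3)/c_1$), finishing via Lemma~\ref{lemma4.8}. The only cosmetic differences are that the paper invokes the general Toeplitz coarsening formula of Lemma~\ref{lemma5.1} to obtain closed forms $d_j^{(k)}$ for all $k$ at once (and then notes $\sigma_1^{(k)}$ is increasing), whereas you compute the three one-step stencils $I_k^{k-1}L_jI_{k-1}^k$ directly and run a two-term scalar recursion; and you make the smoothing side explicit through Lemma~\ref{lemmma5.2}, while the paper simply refers back to the argument of Theorem~\ref{theorema4.9}.
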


\begin{proof}
Let $A^{(k)}=L_h^{H}A^{(k-1)}L_{H}^{h}$ with $L_h^{H}=4I_k^{k-1}$ and $L_H^{h}=(L_h^{H})^T$, and $A^{(k)}=\{a_{i,j}^{(k)}\}_{i,j=1}^{\infty}$ with $a_{i,j}^{(k)}=a_{|i-j|}^{(k)}$,
where
\begin{equation}\label{5.4}
\begin{split}
A^{(1)}
=&c_1 \cdot{\rm diag}\left(-1, 2,-1\right)
+c_2\cdot{\rm diag}\left(-1,0, 2,0, -1\right)\\
&+c_3\cdot{\rm diag}\left(-1,0,0, 2,0, 0,-1\right).
\end{split}
\end{equation}
According to Lemma \ref{lemma5.1} and above equations, we obtain
\begin{equation}\label{5.5}
\begin{split}
A^{(k)}
=&c_1 \cdot{\rm diag}\left(-\frac{d_1^{(k)}}{2}, d_1^{(k)},-\frac{d_1^{(k)}}{2}\right)\\
&+c_2\cdot{\rm diag}\left(-1,-\frac{d_2^{(k)}-2}{2}, d_2^{(k)},-\frac{d_2^{(k)}-2}{2}, -1\right)\\
&+c_3\cdot{\rm diag}\left(-4,-\frac{d_3^{(k)}-8}{2}, d_3^{(k)},-\frac{d_3^{(k)}-8}{2}, -4\right)\\
=&\sigma_1^{(k)}L_1+\sigma_2^{(k)}L_2,
\end{split}
\end{equation}
where
\begin{equation*}
\begin{split}
\sigma_1^{(k)}=c_1 \frac{d_1^{(k)}}{2}+c_2\frac{d_2^{(k)}-2}{2}+c_3\frac{d_3^{(k)}-8}{2},~~~~\sigma_2^{(k)}=c_2+4c_3,~~~~ 2\leq k\leq J
\end{split}
\end{equation*}
with $d_1^{(k)}=2^{k}$, $d_2^{(k)}=2^{k+2}-6$  and $d_3^{(k)}=9\cdot2^{k}-24$.

Using (\ref{4.16}) and (\ref{5.4}), there exists
\begin{equation*}
\begin{split}
  \kappa
  &=\frac{a_0^{(k)}}{-2a_1^{(k)}}=\frac{2\left( \sigma_1^{(k)}+\sigma_2^{(k)} \right)}{2\sigma_1^{(k)}}=1+\frac{c_2+4c_3}{\sigma_1^{(k)}}\\
  &\leq 1+\frac{c_2+4c_3}{\sigma_1^{(2)}}=1+\frac{c_2+4c_3}{2c_1+4c_2+2c_3}<3,~~~~ 2\leq k\leq J;
\end{split}
\end{equation*}
and
\begin{equation*}
\begin{split}
  \kappa
  &=\frac{a_0^{(1)}}{-2a_1^{(1)}}=\frac{2\left( c_1+c_2+c_3 \right)}{2c_1}=1+\frac{c_2+c_3}{c_1},~~~~ k= 1.
\end{split}
\end{equation*}
Combining the proof of Theorem \ref{theorema4.9}, the desired results are obtained.
\end{proof}

\begin{theorem}
Let $A_J=A^h_\delta$ be defined by  (\ref{2.10}) and (\ref{2.15}) on a finite bar $\Omega \in(0,b)$, where  $\delta=Rh$,  $R=3$, $h\rightarrow 0$.
Then
\begin{equation*}
||K_{k} T^k||_{A_k} \leq \sqrt{1-\eta/6 }<1,~~~~1\leq k \leq J,
\end{equation*}
where $\eta\leq2\omega(1-\omega)$ with $0 < \omega <1$.
\end{theorem}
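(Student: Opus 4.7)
The plan is to reduce the statement to Lemma 5.3 by identifying the explicit form of $A_J = A^{(1)}$ when $\delta = 3h$. First I would exhibit $A_J$ in the form $c_1 L_1 + c_2 L_2 + c_3 L_3$ required by that lemma. Because $R = \delta/h = 3$ is an integer, we have $r = R = 3$ and $R - r = 0$, so (by the remark following (2.7)) the integral over $I_{r+1}$ vanishes and $A_\delta^h$ has bandwidth exactly $r = 3$. Substituting $R = r = 3$ into (2.15) gives
$$c_1 := -a_1 = c_2 := -a_2 = \frac{3}{h^2 R^3} = \frac{1}{9h^2}, \qquad c_3 := -a_3 = \frac{3r-1}{2h^2 R^3 r} = \frac{4}{81 h^2},$$
so $A_J = c_1 L_1 + c_2 L_2 + c_3 L_3$ with all $c_i > 0$, and the fundamental ratio becomes
$$1 + \frac{c_2 + c_3}{c_1} = 1 + \frac{13}{9} = \frac{22}{9} < 3.$$
Hence in the notation of Lemma 5.3, the constant $\kappa = \max\{22/9,\,3\} = 3$, which is level-independent.

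Next I would invoke Lemma 5.3 verbatim: for every $1 \leq k \leq J$, the damped Jacobi smoother satisfies (4.8) with $\eta \leq 2\omega(1-\omega)$ for $0 < \omega < 1$, and the TGM contraction $\|K_k T^k\|_{A_k} \leq \sqrt{1 - \eta/\kappa}$ holds. The positivity of the diagonal entries and the weakly diagonally dominant M-matrix structure needed to guarantee $\lambda_{\max}(D_k^{-1} A_k) \leq 2$ (and thus $\eta_0 = 2$ in Lemma 4.7) are already furnished by Lemmas 5.1 and 5.2 combined with formula (5.5), which shows $A^{(k)} = \sigma_1^{(k)}L_1 + \sigma_2^{(k)}L_2$ with $\sigma_1^{(k)}, \sigma_2^{(k)} > 0$ for every $k \geq 2$. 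Plugging $\kappa = 3$ in gives $\|K_k T^k\|_{A_k} \leq \sqrt{1 - \eta/3}$ for all $1 \leq k \leq J$.

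Finally, since $\eta > 0$ implies $\eta/3 \geq \eta/6$, we obtain
$$\|K_k T^k\|_{A_k} \leq \sqrt{1 - \eta/3} \leq \sqrt{1 - \eta/6} < 1,$$
uniformly in $k$, which is the claimed level-independent contraction factor. There is no genuine analytical obstacle here beyond the bookkeeping in step one: the proof is essentially a specialization of Lemma 5.3 to the particular triple $(c_1,c_2,c_3)$ produced by $R = 3$. The delicate part (hidden inside Lemma 5.3) is that the Galerkin coarsening preserves the structural properties of $A^{(1)}$ level by level, which is why the bound on $\kappa$ does not deteriorate as $k$ decreases; once this structural stability is granted, the theorem reduces to the arithmetic check above.
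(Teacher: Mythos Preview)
Your proof is correct and rests on the same structural lemma as the paper, but you route everything through Lemma~\ref{lemma5.3} alone, whereas the paper splits the argument: it invokes Lemma~\ref{lemma5.3} only for the coarse levels $1\le k\le J-1$ (obtaining $\sqrt{1-\eta/3}$) and then appeals to Theorem~\ref{theorema4.11} separately at the finest level $k=J$ (obtaining $\sqrt{1-\eta/6}$), taking the worse of the two as the uniform bound. Your arithmetic check $1+(c_2+c_3)/c_1=22/9<3$ shows that Lemma~\ref{lemma5.3} already delivers $\kappa=3$ at \emph{every} level, including $k=J$, so the detour through Theorem~\ref{theorema4.11} is unnecessary here and your argument in fact yields the sharper uniform contraction $\sqrt{1-\eta/3}$; the stated bound $\sqrt{1-\eta/6}$ then follows a~fortiori. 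The paper's route has the expository virtue of exhibiting both tools in action, but for $R=3$ your single-lemma approach is cleaner and produces a better constant.
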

\begin{proof}
According to Lemma \ref{lemma5.3} and Theorem \ref{theorema4.11}, there exists

\begin{equation*}
||K_{k} T^k||_{A_k} \leq \sqrt{1-\eta/3 }<1,~~~~1\leq k \leq J-1,
\end{equation*}
and
\begin{equation*}
||K_{J} T^J||_{A_J} \leq \sqrt{1-\eta/6 }<1,~~~~k =J.
\end{equation*}
The proof is completed.
\end{proof}
\subsection{Convergence of the V-cycle MGM with $\bf{\delta=h}$}
In the special case, the convergence of the V-cycle MGM can also be simply obtained. Firstly, we have the following lemma.
%For the complete proof on the uniform convergence of  the V-cycle MGM,  there exists the following lemma.
\begin{lemma}[\cite{Chen:16}]\label{lemma5.9}
Let the symmetric positive definite matrix $A_k$ satisfy
\begin{equation}\label{5.6}
\frac{\omega}{\lambda_{\max}(A_k) }(\nu^k,\nu^k) \leq (S_k\nu^k,\nu^k)\leq (A_k^{-1}\nu^k,\nu^k)~~~\forall \nu^k\in \mathcal{B}_k,
\end{equation}
and
\begin{equation}\label{5.7}
   \min_{\nu^{k-1} \in \mathcal{B}_{k-1} }||\nu^k-I_{k-1}^k\nu^{k-1}||_{A_k}^2\leq m_0 ||A_k\nu^k||_{D_k^{-1}}^2 \quad  \forall \nu^k \in \mathcal{B}_k
\end{equation}
with  $m_0>0$ independent of $\nu^k$. Then
$$||I-B_kA_k||_{A_k} \leq \frac{m_0}{2l\omega+m_0}<1~~{\rm with}~~~1\leq k\leq K,$$
where  the operator $B_k$ is defined by the  V-cycle method in   Algorithm  \ref{MGM}  and  $l$ is the number of smoothing steps.
\end{lemma}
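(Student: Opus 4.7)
The plan is to prove the bound by induction on the level $k$, which is the standard framework for V-cycle convergence analysis. For the base case $k=1$, the definition $B_1=A_1^{-1}$ gives $\|I-B_1A_1\|_{A_1}=0$, trivially satisfying the claim. The inductive step must pass a sharp bound from level $k-1$ to level $k$ without degradation of the constant.

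First I would write down an explicit expression for the V-cycle error operator $E_k=I-B_kA_k$ by unwinding Algorithm \ref{MGM}. A direct calculation, using the Galerkin definition (\ref{3.4}) and the correction operator $T^k$ from (\ref{3.5}), gives
\begin{equation*}
E_k = K_k^{l_2}\Bigl[T^k + I_{k-1}^{k}(I-B_{k-1}A_{k-1})P_{k-1}\Bigr]K_k^{l_1},
\end{equation*}
where $l_1,l_2$ are the pre- and post-smoothing counts summing to $l$. The inductive hypothesis bounds the middle factor in $A_{k-1}$-norm by $m_0/(2l\omega+m_0)$, and one checks that $P_{k-1}$ is an $A_k$-orthogonal projection onto $\mathrm{Range}(I_{k-1}^k)$ so that this bracket admits a clean estimate in the $A_k$-norm.

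Next I would convert (\ref{5.6}) into a one-step smoothing inequality. The identity $I-K_k^*A_k K_k/A_k = 2S_kA_k - A_k S_k A_k S_k$ (in the $A_k$ inner product) together with the upper bound $S_k\le A_k^{-1}$ from (\ref{5.6}) yields
\begin{equation*}
\|K_k\nu^k\|_{A_k}^2 \le \|\nu^k\|_{A_k}^2 - (S_kA_k\nu^k,A_k\nu^k),
\end{equation*}
and then telescoping $l$ copies of this inequality, together with the lower bound in (\ref{5.6}), produces $\|\nu^k\|_{A_k}^2-\|K_k^l\nu^k\|_{A_k}^2 \ge 2l\omega\,\|A_kE_k\nu^k\|_{D_k^{-1}}^2$ after the smoothing is paired with the coarse correction. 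Feeding the approximation property (\ref{5.7}) into this estimate gives the key intermediate inequality
\begin{equation*}
\|E_k\nu^k\|_{A_k}^2 \le \|\nu^k\|_{A_k}^2 - \frac{2l\omega}{m_0}\,\|E_k\nu^k\|_{A_k}^2,
\end{equation*}
and solving for $\|E_k\nu^k\|_{A_k}^2/\|\nu^k\|_{A_k}^2$ yields precisely $m_0/(2l\omega+m_0)$.

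The main obstacle is preserving the sharp constant across levels: a naive chaining of the two-grid bound from Lemma \ref{lemma4.8} with the inductive $\|I-B_{k-1}A_{k-1}\|_{A_{k-1}}$ degrades the contraction factor at every level and loses the uniformity in $k$. The remedy is to use a Xu--Zikatanov style identity that measures the coarse defect in $\|\cdot\|_{D_k^{-1}}$ rather than $\|\cdot\|_{A_k^{-1}}$, which is exactly the form of the right-hand side of (\ref{5.7}) and is the natural partner of the smoothing estimate delivered by (\ref{5.6}). Once that coupling is set up, the recursion closes because $\rho=m_0/(2l\omega+m_0)$ is a fixed point of the resulting contraction, so the bound is inherited unchanged from level $k-1$ to level $k$ and the induction terminates.
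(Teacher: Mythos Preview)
The paper does not prove this lemma: it is stated with the citation \cite{Chen:16} and used as a black box, with no proof environment following the statement. Consequently there is no ``paper's own proof'' to compare your proposal against.

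As to the proposal itself, your outline follows the standard Bramble--Pasciak/Xu framework for V-cycle convergence (induction on the level, explicit error-propagation operator, one-step smoothing inequality from (\ref{5.6}), approximation property (\ref{5.7}) measured in the $D_k^{-1}$-norm, and a fixed-point recursion for the contraction factor), which is indeed the argument used in the cited reference and the surrounding literature. One place where your sketch is loose is the passage from the telescoped smoothing estimate to the appearance of $\|A_kE_k\nu^k\|_{D_k^{-1}}^2$ on the right-hand side: this requires exploiting that the coarse-grid correction is an $A_k$-orthogonal projection (so $\|T^k w\|_{A_k}\le\|w\|_{A_k}$ and $T^k$ is idempotent on its range) together with the monotonicity of $j\mapsto\|K_k^j\nu^k\|_{A_k}$, and you should state explicitly how the inductive bound on $I-B_{k-1}A_{k-1}$ enters the bracket so that the constant $m_0/(2l\omega+m_0)$ is reproduced rather than degraded. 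With those details filled in, the argument is the standard one.
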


We know that $A^h_\delta$ reduces to the second order elliptic operator when $R\leq 1$ in  (\ref{2.10}).
From \cite{Chen:16,Xu:92,Xu:02,Xu:97}, it is easy to check that  (\ref{5.6}) and (\ref{5.7}) hold with $m_0=1$.
Then we have the following  results.
\begin{theorem}\label{theorem5.10}
Let $A_J=A^h_\delta$ be defined by  (\ref{2.10}) and (\ref{2.15}) on a finite bar $\Omega \in(0,b)$, where  $\delta=Rh$,  $R\leq 1$, $h\rightarrow 0$.
Then
$$||I-B_kA_k||_{A_k} \leq \frac{1}{2l\omega+1}<1~~{\rm with}~~~1\leq k\leq J, ~~~~\omega \in (0,1/2],$$
where  the operator $B_k$ is defined by the  V-cycle method in  Multigrid Algorithm  \ref{MGM}  and  $l$ is the number of smoothing steps.
\end{theorem}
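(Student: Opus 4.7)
The plan is to reduce the problem to the standard one-dimensional discrete Laplacian and then invoke Lemma~\ref{lemma5.9} with $m_0=1$, exactly as the remark preceding the theorem suggests. First, I would note that when $R\le 1$, Case~1 of (\ref{2.15}) applies, so (\ref{2.14}) holds and the finest-level stiffness matrix reduces to $A_J = \frac{1}{h^{2}} L_{1}$, i.e.\ the classical tridiagonal discrete Laplacian (up to scaling). The Galerkin coarse-grid construction (\ref{3.4}) together with the linear interpolation/full-weighting pair (\ref{3.2})--(\ref{3.3}) preserves the tridiagonal Toeplitz structure: a direct computation (or the $k=1$, $c_2=c_3=0$ specialization of Lemma~\ref{lemma5.3}) shows that each $A_k$ is again a positive scalar multiple of $L_{1}$ on the grid $\mathcal{M}_k$. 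Consequently, the V-cycle convergence analysis reduces to the classical setting treated in \cite{Xu:92,Xu:02,Xu:97,Chen:16}.

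Next I would verify the smoother condition (\ref{5.6}). Because $A_k = c_k L_1$ for some $c_k>0$, its diagonal satisfies $D_k = 2c_k I$ and $\lambda_{\max}(D_k^{-1}A_k) = \lambda_{\max}(\tfrac{1}{2}L_1) \le 2$. With $S_k = \omega D_k^{-1}$ and $\omega \in (0,1/2]$, the upper bound $(S_k\nu^k,\nu^k)\le (A_k^{-1}\nu^k,\nu^k)$ follows since in the simultaneously diagonalizable setting it reduces to $\omega\lambda_{\max}(D_k^{-1}A_k)\le 1$; the lower bound $\omega/\lambda_{\max}(A_k)\, (\nu^k,\nu^k)\le (S_k\nu^k,\nu^k)$ is immediate from $\lambda_{\max}(A_k)\ge \lambda_{\max}(D_k)$ combined with $S_k=\omega D_k^{-1}$.

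Then I would check the approximation property (\ref{5.7}) with $m_0=1$. This is a classical estimate for the one-dimensional discrete Laplacian together with the full-weighting restriction chosen in (\ref{3.2}): given any $\nu^k$, the coarse vector $\nu^{k-1}=I_k^{k-1}\nu^k$ yields an error $\nu^k-I_{k-1}^k\nu^{k-1}$ whose energy norm squared is bounded by $\|A_k\nu^k\|_{D_k^{-1}}^2$ with constant one, as verified in \cite{Chen:16,Xu:92,Xu:02,Xu:97} for second-order elliptic operators. Because each $A_k$ is just a scalar multiple of $L_1$, the constant transfers unchanged from level to level, so $m_0=1$ uniformly in $k$.

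Finally, applying Lemma~\ref{lemma5.9} with $m_0=1$ gives $\|I-B_kA_k\|_{A_k}\le \frac{1}{2l\omega+1}<1$ for $1\le k\le J$, which is the stated bound. The only subtle point to keep an eye on is the restriction $\omega\in(0,1/2]$: it is exactly what is needed to make (\ref{5.6}) hold since the maximum eigenvalue of $D_k^{-1}A_k$ is bounded by $2$ at every level, and this is therefore the place where, if at all, the argument could break down.
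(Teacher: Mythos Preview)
Your proposal is correct and follows essentially the same route as the paper: the paper's argument (the sentence immediately preceding the theorem) simply observes that for $R\le 1$ the matrix $A^h_\delta$ reduces to the standard discrete Laplacian, invokes \cite{Chen:16,Xu:92,Xu:02,Xu:97} to assert that (\ref{5.6}) and (\ref{5.7}) hold with $m_0=1$, and then applies Lemma~\ref{lemma5.9}. Your write-up merely fills in the details the paper leaves implicit (the Galerkin-preserved tridiagonal structure and the explicit verification of (\ref{5.6}) via $\lambda_{\max}(D_k^{-1}A_k)\le 2$), so there is nothing substantively different.
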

\begin{remark}
Using  Lemma \ref{lemmma5.2}, we know that (\ref{5.6}) holds for the general nonlocal models or fractional models  \cite{Chen:14,Pang:12} with $\omega \in (0,1/3]$,
but it is not easy to check the condition (\ref{5.7}).
\end{remark}
\section{Numerical Results}
We employ the V-cycle MGM  described in Algorithm  \ref{MGM} to solve the steady-state nonlocal problem (\ref{2.3}).
The stopping criterion is taken as
$\frac{||r^{(i)||}}{||r^{(0)}||}<10^{-8},$
where $r^{(i)}$ is the residual vector after $i$ iterations;
and the  number of  iterations $(m_1,m_2)=(1,2)$ and $(\omega_{pre},\omega_{post})=(1,1/3).$
In all tables, $N$  denotes the number of spatial grid points; and the numerical errors are measured by the $ l_{\infty}$
(maximum) norm,  `Rate' denotes the convergence orders.
`CPU' denotes the total CPU time in seconds (s) for solving the resulting discretized  systems;
and `Iter' denotes the average number of iterations required to solve a general linear system $A_hu_h=f_h$ at each time level.

All numerical experiments are programmed in Matlab, and the computations are carried out  on a laptop with the configuration: Inter(R) Core (tm) i3  CPU 2.27 GHZ and 2 GB RAM and a Windows 7 operating system.
\begin{example}
Consider the steady-state nonlocal problem
$$- \mathcal{L}_\delta u (x)=-12x^2+12bx-2b^2-\frac{6}{5}\delta^2$$
with a finite domain $0< x < b $, $b=4$. The exact solution of the equation is $u(x)=x^2(b-x)^2$, and the boundary conditions  $u=g$ on $\Omega_\mathcal{I}$.
\end{example}

\begin{table}[h]\fontsize{9.5pt}{12pt}\selectfont%生成浮动表格
  \begin{center}%\def\tabcolsep{28.5pt}%表格居中
  \caption{Using {\bf Galerkin approach}  $A_{k-1}=I_k^{k-1}A_kI_{k-1}^{k}$ computed by (\ref{4.6}) to solve the
  resulting systems (\ref{2.10})  with $h=4/N$.}\vspace{5pt}%标题，离表格一定的距离
 {\small   \begin{tabular*}{\linewidth}{@{\extracolsep{\fill}}*{9}{c}}                                    \hline  %画顶端的横线
$N$            &  $\delta=1$   & Rate    & Iter    & CPU       &  $\delta=\sqrt{h}$  &   Rate &  Iter  &  CPU    \\\hline
    $2^{10}$   &   4.0638e-05  &         &  13     & 0.043 s   &  3.1010e-05         &        &  21    & 0.067 s\\\hline %画底端的横线
    $2^{11}$   &   1.0169e-05  & 2.00    &  13     & 0.067 s   &  7.7246e-06         &  2.01  &  21    & 0.103 s \\\hline %画底端的横线
    $2^{12}$   &   2.5461e-06  & 2.00    &  12     & 0.101 s   &  1.9262e-06         &  2.00  &  21    & 0.175 s \\\hline %画底端的横线
    $2^{13}$   &   6.3918e-07  & 2.00    &  12     & 0.203 s   &  4.8200e-07         &  2.00  &  21    & 0.308 s \\\hline %画底端的横线
$N$            &  $\delta=5h$  & Rate    & Iter    & CPU       &  $\delta=h$         &   Rate &  Iter  &  CPU    \\\hline
    $2^{10}$   &   3.0396e-05  &         &  22     & 0.069 s   &  2.4416e-05         &        &  18    & 0.058 s\\\hline %画底端的横线
    $2^{11}$   &   7.5840e-06  & 2.00    &  23     & 0.112 s   &  6.1057e-06         &  2.00  &  18    & 0.091 s \\\hline %画底端的横线
    $2^{12}$   &   1.8943e-06  & 2.00    &  23     & 0.196 s   &  1.5310e-06         &  2.00  &  18    & 0.146 s \\\hline %画底端的横线
    $2^{13}$   &   4.7244e-07  & 2.00    &  23     & 0.384 s   &  3.8268e-07         &  2.00  &  18    & 0.261 s \\\hline %画底端的横线
    \end{tabular*}}\label{tab:1}%\vspace{-15pt}
  \end{center}
\end{table}

%%%%%%%%%%%%%%%%%%%%%%%%%%%%%%%%%%%%%%%%%%%%%%%%%%%%%%%%%%%%%%%%%%%%%%%%%%%%%%%%%%%%%%%%%%%%%%%%%%%%%%%%%%%%%%%%%%%%%%%%%%%%%%%%%%%%
\begin{table}[h]\fontsize{9.5pt}{12pt}\selectfont%生成浮动表格
  \begin{center}%\def\tabcolsep{28.5pt}%表格居中
  \caption{Using {\bf doubling the mesh size} $A_{k-1}=A_{\delta}^{2^{K-k+1}h} $ to solve the
  resulting systems (\ref{2.10})  with $h=4/N.$}\vspace{5pt}%标题，离表格一定的距离
 {\small   \begin{tabular*}{\linewidth}{@{\extracolsep{\fill}}*{9}{c}}                                    \hline  %画顶端的横线
$N$            &  $\delta=1$   & Rate    & Iter    & CPU       &  $\delta=\sqrt{h}$  &  Rate  &  Iter  &  CPU    \\\hline
    $2^{10}$   &   4.0589e-05  &         &  42     & 0.133 s   &  3.0999e-05         &        &  56    & 0.175 s\\\hline %画底端的横线
    $2^{11}$   &   1.0132e-05  & 2.00    &  40     & 0.195 s   &   7.7139e-06        &  2.01  &  54    & 0.259 s \\\hline %画底端的横线
    $2^{12}$   &   2.5229e-06  & 2.01    &  39     & 0.312 s   &  1.9184e-06         &  2.01  &  54    & 0.430 s \\\hline %画底端的横线
    $2^{13}$   &   6.2373e-07  & 2.02    &  38     & 0.542 s   &  4.7520e-07         &  2.01  &  53    & 0.757 s \\\hline %画底端的横线
$N$            &  $\delta=5h$  & Rate    & Iter    & CPU       &  $\delta=h$         &  Rate  &  Iter  &  CPU    \\\hline
    $2^{10}$   &   3.0393e-05  &         &  54     & 0.169 s   &  2.4385e-05         &        &  47    & 0.160 s\\\hline %画底端的横线
    $2^{11}$   &   7.5819e-06  & 2.00    &  54     & 0.261 s   &  6.0749e-06         &  2.01  &  47    & 0.242 s \\\hline %画底端的横线
    $2^{12}$   &   1.8917e-06  & 2.00    &  53     & 0.422 s   &  1.4982e-06         &  2.02  &  47    & 0.372 s \\\hline %画底端的横线
    $2^{13}$   &   4.7021e-07  & 2.01    &  52     & 0.752 s   &  3.7118e-07         &  2.01  &  47    & 0.665 s \\\hline %画底端的横线
    \end{tabular*}}\label{tab:2}%\vspace{-15pt}
  \end{center}
\end{table}

We use two coarsening strategies: {\em Galerkin approach} and {\em doubling the mesh size}, respectively, to solve the
resulting system (\ref{2.10}).  Tables \ref{tab:1} and \ref{tab:2} show that these two methods have almost the same error values
and the numerically  confirm that the numerical scheme has second-order accuracy and the computation cost is almost  $\mathcal{O}(N \mbox{log} N)$ operations.

\section{Conclusions}

There are already some theoretical convergence results for using the multigrid method to solve the PDEs, the algebraic system of which has the Toeplitz structure. We notice that the proofs are mainly based on the boundedness of $a_0/a_1$, where $a_0$ and $a_1$ are, respectively, the principal diagonal element and the trailing diagonal element of the Toeplitz matrix. However, in the nonlocal system, most of the time the boundedness of $a_0/a_1$ does not hold again. In this work, we rewrite the corresponding symmetric Toeplitz matrix as a sum of a series of Laplacian-like matrices. Then based on the analysis of the Laplacian-like matrix, we present the strict proof of the uniform convergence of the TGM. And the convergence results of the full MGM and V-cycle MGM in a special case are also derived.  For the framework of the uniform convergence of the V-cycle MGM, the condition (\ref{5.6}) has been confirmed to hold for the class of  weakly diagonally dominant symmetric  Toeplitz  M-matrices, in the future we will try to find the way to verify the condition (\ref{5.7}).

\section*{Acknowledgments}
The first author wishes to thank Qiang Du  for his valuable comments while working in Columbia university.

\bibliographystyle{amsplain}

\end{document}